\documentclass{amsart}
\allowdisplaybreaks[1]
\usepackage{mathtools} \mathtoolsset{showonlyrefs}
\usepackage{verbatim}
\usepackage{enumitem}
\usepackage{placeins}

\usepackage{tikz}
\usetikzlibrary{angles, backgrounds, calc, intersections}
\usetikzlibrary{shapes.geometric}

\usepackage{hyperref}

\newtheorem{theorem}{Theorem}
\newtheorem{lemma}[theorem]{Lemma}
\newtheorem{corollary}[theorem]{Corollary}
\newtheorem{proposition}[theorem]{Proposition}
\newtheorem*{remark}{Remark}
\newtheorem{definition}[theorem]{Definition}

\def \R{\mathbb R}

\def \II{\mathrm{I\!I}}

\newcommand{\sumTilde}[1][-p]{
	\mathbin{
		\widetilde{+}_{#1}
	}
}
\newcommand {\vol}[1]{\operatorname{vol}\left(#1\right)}

\title{A Smooth Covariogram and the Transition Between Projection Bodies}

\author{Juli\'an Haddad}
\author{C. Hugo Jim\'enez}
\author{Matheus Resende}
\date{\today}

\begin{document}

	\begin{abstract}
We introduce a smooth radial approximation of the covariogram of a convex body.
The construction replaces the intersection \(K\cap(K+x)\) by the radial
\((-p)\)-mean of \(K\) and \(K+x\), producing a family \(g_{K,p}\) which converges
pointwise to the classical covariogram as \(p\to\infty\). For each fixed \(p>0\),
the function \(g_{K,p}\) is smooth near the origin, and its Hessian is computed
explicitly in terms of the operator defining the polar \(L_2\)-projection body.

We study the simultaneous limit in which \(p\to\infty\) and the level deficit is
\(\delta=p^\alpha\), \(\alpha<0\). The normalized level sets exhibit three
regimes: the polar projection body appears for \(\alpha>-1\), the polar
\(L_2\)-projection body appears for \(\alpha<-1\), and a critical transition body
appears at \(\alpha=-1\). This transition body is described by an explicit \(\log\cosh\)-type boundary
integral, placing the critical regime naturally within the framework of Orlicz
projection bodies.
\end{abstract}
	
	\maketitle
%

	\section{Introduction}

Let \(K\subset \mathbb R^n\) be a convex body, i.e. a compact convex set with
non-empty interior. The covariogram of \(K\) is the function
\[
g_K(x)=\operatorname{vol}(K\cap (K+x)),\qquad x\in\mathbb R^n.
\]
Equivalently,
\[
g_K=\mathbf{1}_K\ast \mathbf{1}_{-K},
\]
since
\[
(\mathbf{1}_K\ast \mathbf{1}_{-K})(x)
=
\int_{\mathbb R^n}\mathbf{1}_K(y)\mathbf{1}_{-K}(x-y)\,dy
=
\operatorname{vol}(K\cap(K+x)).
\]
Thus the covariogram is the convolution square of the indicator function of
\(K\), up to reflection, and measures the volume of overlap between \(K\) and its
translates.

The covariogram was introduced by Matheron in stochastic geometry and integral
geometry \cite{Matheron1975}. It has since become a central object in geometric
tomography, convex geometry, and related inverse problems. One of the most
classical questions in this direction is Matheron's covariogram problem, which
asks whether \(g_K\) determines \(K\), up to translations and reflections
\cite{Matheron1986}. This problem has generated a substantial literature; see,
for instance, \cite{Bianchi2005,AverkovBianchi2009,Bianchi2023}. The covariogram
is also closely related to phase retrieval, since its Fourier transform satisfies
\[
\widehat{g_K}=|\widehat{\mathbf{1}_K}|^2;
\]
see \cite{BianchiGardnerKiderlen2011}.

In the present paper we are not concerned with uniqueness questions. Instead, we
study the local geometry of the level sets of \(g_K\) near its maximum. Since
\[
g_K(0)=\operatorname{vol}(K),
\]
the sets
\[
\{x\in\mathbb R^n:g_K(x)\geq\operatorname{vol}(K)-\delta\}
\]
describe the behavior of the covariogram close to the origin. This behavior is
known to be connected to the polar projection body \(\Pi^\circ K\). More
precisely, as \(\delta\to0^+\), these level sets converge, after the
normalization by \(\delta\), to \(\Pi^\circ K\); see
\cite{kiener1986extremalitat,schmuckenschlager1992distribution}. This is one of the links between the
local behavior of the covariogram and affine convex geometry.

We recall the normalization used throughout the paper. The polar projection body
\(\Pi^\circ K\) is the unit ball of the norm
\[
\|u\|_{\Pi^\circ K}
=
h_{\Pi K}(u)
=
\frac12\int_{\partial K}|u\cdot n_K(x)|\,dS(x),
\qquad u\in\mathbb R^n.
\]
Similarly, the polar \(L_2\)-projection body \(\Pi_2^\circ K\) is the unit ball
of the quadratic form
\[
\|u\|_{\Pi_2^\circ K}^2
=
\int_{\partial K}
\frac{|u\cdot n_K(x)|^2}{x\cdot n_K(x)}
\,dS(x),
\qquad u\in\mathbb R^n.
\]
Equivalently,
\[
\Pi_2^\circ K
=
\{u\in\mathbb R^n:u^t\mathcal{M}_Ku\le1\},
\]
where
\[
\mathcal{M}_K
=
\int_{\partial K}
\frac{n_K(x)\otimes n_K(x)}
{x\cdot n_K(x)}
\,dS(x).
\]
These bodies satisfy the natural linear covariance relations
\[
\Pi^\circ(TK)=|\det T|^{-1}T(\Pi^\circ K),
\qquad
\Pi_2^\circ(TK)=|\det T|^{-1/2}T(\Pi_2^\circ K),
\]
for every \(T\in GL(n)\).

The projection body and its polar play an important role in several affine
isoperimetric inequalities, including the Petty projection inequality and Zhang's
reverse inequality. They also arise naturally in the study of convolution bodies
and their limiting behavior. In particular, the level sets of convolution
functions of convex bodies have been studied by Kiener, Schmuckenschl{\"a}ger,
Meyer--Reisner--Schmuckenschl{\"a}ger, and Tsolomitis, among others; see
\cite{kiener1986extremalitat,schmuckenschlager1992distribution,MeyerReisnerSchmuckenschlager1993,
Tsolomitis1997}. These works show that the asymptotic geometry of overlap
functions is deeply connected with projection bodies and affine inequalities.

The convolution-body point of view has also been developed in a series of works
closer to the present setting. Alonso-Gutiérrez, Jiménez and Villa introduced
\(\theta\)-convolution bodies of two convex bodies and proved Brunn--Minkowski
and Zhang-type inequalities for them \cite{AlonsoJimenezVilla2013}. Later,
Alonso-Gutiérrez, González and Jiménez studied \(i\)-th convolution bodies and
obtained volume inequalities for their limiting bodies, including extensions of
Zhang's inequality through their relation with polar projection bodies
\cite{AlonsoGonzalezJimenez2015}. In a functional direction,
Alonso-Gutiérrez, González Merino, Jiménez and Villa extended the convolution-body
construction to log-concave functions in the context of Rogers--Shephard
inequalities \cite{AlonsoGonzalezJimenezVilla2016}. More recently,
Alonso-Gutiérrez and Martín Goñi used Ball's bodies to obtain sharp
Brunn--Minkowski-type estimates for \(\theta\)-convolution bodies in a range of
parameters \cite{AlonsoMartinGoni2024}. Our construction is different: rather
than considering super-level sets of the classical covariogram, we smooth the
covariogram itself by replacing the intersection with a radial mean.

A fundamental feature of the covariogram is that it is generally not smooth at
the origin. Indeed, for a convex body \(K\) with sufficiently regular boundary
one has the first-order expansion
\[
\operatorname{vol}(K)-g_K(tu)
=
t\,h_{\Pi K}(u)+O(t^2),
\qquad t\to0^+.
\]
Thus \(g_K\) has directional derivatives at the origin, but it typically does not
have a Hessian there. The graph of \(g_K\) has a tangent cone at its maximum
rather than a quadratic tangent profile.

Our purpose is to introduce a smooth approximation of the covariogram which
preserves its geometric nature but replaces the intersection \(K\cap(K+x)\) by a
radial mean. Radial mean constructions have appeared naturally in affine convex
geometry, notably in the work of Gardner and Zhang on affine inequalities and
radial mean bodies \cite{GardnerZhang1998}. In the present setting, for \(p>0\)
and \(x\in\operatorname{int}(-K)\), we will define a star body \(K_{x,p}\) by taking
the radial \((-p)\)-mean of \(K\) and \(K+x\). The corresponding generalized
\(p\)-covariogram is
\[
g_{K,p}(x)=\operatorname{vol}(K_{x,p}).
\]
As \(p\to\infty\), the radial \((-p)\)-mean converges to the pointwise minimum
of the two radial functions, and consequently
\[
g_{K,p}(x)\to g_K(x)
\]
for every \(x\in\operatorname{int}(-K)\).

The advantage of this construction is that, for each fixed \(p\), the function
\(g_{K,p}\) is smooth near the origin. Its first derivative vanishes at the
origin, and its Hessian can be computed explicitly. We prove that, for a convex
body \(K\) with \(C^2\) boundary,
\[
H g_{K,p}(0)
=
-\frac{n+p}{4}\mathcal{M}_K.
\]
Thus the infinitesimal quadratic behavior of the smoothed covariogram
has
\(\Pi_2^\circ K\) as its associated unit ball.

This leads to a first local limit theorem. If \(p>0\) is fixed and
\(\delta\to0^+\), then the level sets
\[
L(p,\delta)
=
\{x\in\operatorname{int}(-K):
\operatorname{vol}(K)-g_{K,p}(x)\leq\delta\}
\]
satisfy
\[
\frac{L(p,\delta)\cap B_\rho}{\sqrt{\delta}}
\longrightarrow
\left(\frac{8}{n+p}\right)^{1/2}\Pi_2^\circ K
\]
in the Hausdorff metric, for a sufficiently small neighborhood \(B_\rho\) of the
origin.

On the other hand, if one first lets \(p\to\infty\), the generalized covariogram
converges to the usual covariogram, whose local level sets have \(\Pi^\circ K\) as their asymptotic shape. Thus the two limits
\[
p\to\infty
\qquad\text{and}\qquad
\delta\to0^+
\]
do not commute. The main result of the paper describes the transition between
these regimes when both limits are taken simultaneously.

Let \(\delta=p^\alpha\), with \(\alpha<0\). We prove that three different regimes
occur. If \(\alpha>-1\), then the level sets behave as those of the classical
covariogram and
\[
p^{-\alpha}L(p,p^\alpha)\longrightarrow \Pi^\circ K.
\]
If \(\alpha<-1\), then the quadratic behavior of the smoothed covariogram
dominates and
\[
p^{\frac{1-\alpha}{2}}L(p,p^\alpha)
\longrightarrow
\sqrt{8}\,\Pi_2^\circ K.
\]
The critical case \(\alpha=-1\) gives a genuine transition body. In this case,
\[
pL(p,p^{-1})\longrightarrow \mathcal T_K,
\]
where
\[
\mathcal T_K
=
\{y\in\mathbb R^n:\Phi_K(y)\le1\}
\]
and
\[
\Phi_K(y)
=
\int_{\partial K}
(x\cdot n_K(x))
\log\cosh\left(
\frac{y\cdot n_K(x)}
{2x\cdot n_K(x)}
\right)
\,dS(x).
\]
This body interpolates between the two limiting geometries: near the origin
\(\Phi_K\) has a quadratic expansion determined by \(\mathcal{M}_K\), while at infinity
its linear growth is determined by \(h_{\Pi K}\), since
\[
\lim_{\lambda\to\infty}
\frac{\Phi_K(\lambda y)}{\lambda}
=
\frac12\int_{\partial K}|y\cdot n_K(x)|\,dS(x)
=
h_{\Pi K}(y).
\]

This transition body also has a natural interpretation in the framework of
Orlicz projection bodies. More precisely, the functional defining \(\mathcal T_K\)
is an Orlicz-type boundary integral of the same form as the one appearing in the
definition of the Orlicz projection body of Lutwak--Yang--Zhang \cite{LutwakYangZhang2010OrliczProjection}, for the Orlicz function
\[
\varphi(t)=\log\cosh(t/2).
\]
Thus the critical regime does not produce an isolated new object, but rather
selects a canonical Orlicz projection-type body determined by the smoothing
procedure.

The paper is organized as follows. In Section 2 we establish the notation and
preliminary facts used throughout the manuscript. In Section 3 we prove the
differentiability of \(g_{K,p}\) and compute its first and second derivatives.
This gives a preliminary Hessian formula involving an auxiliary operator \(\mathcal{N}_K\).
In Section 4 we prove the identity
\[
\mathcal{N}_K=(n-1)\mathcal{M}_K,
\]
which yields the explicit Hessian formula. Finally, in Section 5 we study the
level sets of \(g_{K,p}\), proving the fixed-\(p\) limit theorem and the
simultaneous-limit theorem, including the critical transition case.

We now introduce the radial combinations that will be used to define the generalized covariogram. We first recall the relevant terminology for star bodies.

	A set $K\subset \R^n$ is star-shaped with respect to the origin if every ray from the origin intersects $K$ in a line segment. A star body is a star-shaped set whose radial function
	\begin{equation}
		\rho_K(v) = \sup \{ \lambda \ge 0 : \lambda v \in K \}, \qquad v \in S^{n-1},
	\end{equation}
	is continuous and positive on $S^{n-1}$. 
Although the radial function is initially defined on \(S^{n-1}\), we shall use
the same notation for its homogeneous extension to \(\mathbb R^n\setminus\{0\}\),
defined by
\[
\rho_K(z)=\sup\{\lambda\ge0:\lambda z\in K\}.
\]
Thus, if \(z=\lambda v\) with \(v\in S^{n-1}\) and \(\lambda>0\), then
\[
\rho_K(z)=\lambda^{-1}\rho_K(v).
\]
In particular, for every \(T\in GL(n)\),
\[
\rho_{TK}(z)=\rho_K(T^{-1}z),\qquad z\in\mathbb R^n\setminus\{0\}.
\]
	
	\begin{definition}\label{def:radial_lp_sum}
		Let $\beta \in \R \setminus \{0\}$.
		Given two star bodies $K$ and $L$, we define their radial $\beta$-combination, $K \sumTilde[\beta] L$ as the body whose radial function is given by:
		\begin{equation}
			\label{eq_beta_sum}
			\rho_{K \sumTilde[\beta] L}(v)^{\beta} = \frac{\rho_K(v)^{\beta}+ \rho_L(v)^{\beta}}{2}, \qquad \forall v \in S^{n-1}.
		\end{equation}
	\end{definition}
	
	\begin{definition}
		Let $K \subset \R^n$ be a convex body containing the origin in its interior, and let $p >0$. For $x \in \operatorname{int}(-K)$, we write $K_{x,p} = K \sumTilde (K + x)$ and define the generalized $p$-covariogram $g_{K, p}$ as
		\begin{equation}
			g_{K, p}(x) = \vol{K_{x,p}},\qquad x \in \operatorname{int}(-K).
		\end{equation}
	\end{definition}
The function \(g_{K,p}\) is naturally defined on \(\operatorname{int}(-K)\), which
is precisely the set of translations for which \(K+x\) contains the origin in its
interior. Since all the asymptotic results below are local at the origin, this
domain is sufficient for our purposes.

We next compute the Hessian of \(g_{K,p}\) at the origin.
%

	\begin{theorem}\label{thm:second_derivative_covariogram_simplified}
		Let $K \subset \R^n$ be a convex body with $C^2$ boundary which contains the origin in its interior. The Hessian of the function $g_{K,p}$ at the origin is given by
		\begin{equation}
			\label{eq_hessian_gKp}
			H_{g_{K,p}}(0) = -\frac{n+p}{4}\mathcal{M}_K.
		\end{equation}
	\end{theorem}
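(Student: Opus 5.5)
The plan is to write the volume of the star body $K_{x,p}$ in polar coordinates and differentiate twice under the integral sign:
\[
g_{K,p}(x)=\frac1n\int_{S^{n-1}}\rho_{K_{x,p}}(v)^n\,dv,
\qquad
\rho_{K_{x,p}}(v)^n=F_v\bigl(f_v(x)\bigr),
\]
where $f_v(x)=\rho_{K+x}(v)$ and
\[
F_v(s)=\Bigl(\tfrac{\rho_K(v)^{-p}+s^{-p}}{2}\Bigr)^{-n/p}.
\]
The whole computation rests on comparing $g_{K,p}$ with the translation-invariant function $x\mapsto \vol{K+x}=\frac1n\int_{S^{n-1}} f_v(x)^n\,dv$, which is constant.

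\textbf{Regularity and first derivatives of $f_v$.} Write $\rho=\rho_K(v)$, $y=\rho v\in\partial K$ and $n=n_K(y)$. The point $f_v(x)v-x$ lies on $\partial K$. Since $\partial K$ is $C^2$ and $v\cdot n>0$ (the origin is interior), the implicit function theorem shows that $f_v$ is $C^2$ near $x=0$, uniformly in $v$. Differentiating the constraint gives
\[
\nabla f_v(0)=\frac{n}{v\cdot n}.
\]
This uniformity justifies differentiating under the integral for $x$ in a small ball. For the first derivative, $F_v'(\rho)=\tfrac n2\rho^{n-1}$, so
\[
\nabla g_{K,p}(0)=\tfrac12\nabla_x\vol{K+x}\big|_{x=0}=0.
\]

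\textbf{Second derivatives.} A direct chain-rule computation, done in the variable $u=s^{-p}$, gives
\[
F_v''(\rho)=\frac{n(n-p-2)}{4}\rho^{n-2}.
\]
Hence
\[
H_{g_{K,p}}(0)=\int_{S^{n-1}}\Bigl[\tfrac12\rho^{n-1}D^2f_v(0)+\tfrac{n-p-2}{4}\rho^{n-2}\nabla f_v\otimes\nabla f_v\Bigr]dv .
\]
The Hessian of $D^2f_v(0)$ itself is awkward to compute, so I would eliminate it using the fact that the Hessian of the constant function $\vol{K+x}$ vanishes:
\[
\int_{S^{n-1}}\rho^{n-1}D^2f_v(0)\,dv=-(n-1)\int_{S^{n-1}}\rho^{n-2}\nabla f_v\otimes\nabla f_v\,dv .
\]
Substituting this, the coefficient becomes $-\tfrac{n-1}{2}+\tfrac{n-p-2}{4}=-\tfrac{n+p}{4}$, so
\[
H_{g_{K,p}}(0)=-\frac{n+p}{4}\int_{S^{n-1}}\rho^{n-2}\frac{n\otimes n}{(v\cdot n)^2}\,dv .
\]

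\textbf{Conversion to a boundary integral.} It remains to identify the last integral with $\mathcal M_K$. Use the change of variables $y=\rho_K(v)v$ from $S^{n-1}$ to $\partial K$, for which
\[
dv=\frac{y\cdot n_K(y)}{|y|^n}\,dS(y)=\rho^{1-n}(v\cdot n)\,dS(y).
\]
The integrand then becomes
\[
\frac{n\otimes n}{\rho\,(v\cdot n)}\,dS=\frac{n_K(y)\otimes n_K(y)}{y\cdot n_K(y)}\,dS(y),
\]
and integrating over $\partial K$ gives exactly $\mathcal M_K$.

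\textbf{Main obstacle.} I expect the delicate step to be the analytic justification rather than the algebra. One needs uniform $C^2$ control of $x\mapsto\rho_{K+x}(v)$ over $v\in S^{n-1}$, which requires only a $C^2$ boundary and no curvature assumptions. This is what allows differentiation under the integral, and it is also what lets the term $D^2 f_v$ be avoided entirely via the translation-invariance trick instead of computing it through the second fundamental form.
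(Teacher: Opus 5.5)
Your proof is correct, and it takes a genuinely different and much shorter route than the paper. The paper first computes the second variation $D^2_x\rho_{K+x}(v)|_{x=0}=-Q_{x_0(v)}/(v\cdot n_K(x_0(v)))$ explicitly through the second fundamental form (Lemma~\ref{lem:second_derivative_translated}). This gives the intermediate formula $\frac{n-p-2}{4}\mathcal{M}_K-\frac12\mathcal{N}_K$ (Theorem~\ref{thm:second_derivative_covariogram}). The paper then needs all of Section~\ref{sec:equivalence} to prove $\mathcal{N}_K=(n-1)\mathcal{M}_K$, using three ingredients: a trace identity obtained by integration by parts on $\partial K$, the $GL(n)$-covariance of both operators, and an argument over positive definite matrices.

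Your use of the constancy of $x\mapsto\vol{K+x}$ bypasses all of this. In fact, combined with Lemma~\ref{lem:second_derivative_translated}, your identity $\int_{S^{n-1}}\rho^{n-1}D^2f_v(0)\,dv=-(n-1)\int_{S^{n-1}}\rho^{n-2}\nabla f_v\otimes\nabla f_v\,dv$ reads literally $-\mathcal{N}_K=-(n-1)\mathcal{M}_K$, so it yields Theorem~\ref{thm:N_equals_n-1_M} in one line. In effect, this is the paper's remark that $g_{K,-n}$ is constant, turned into the proof rather than noted afterwards: the Hessian is affine in $p$ and must vanish at $p=-n$.

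The paper's route buys an explicit curvature expression for the second variation of the radial function, and an intrinsic boundary-integral proof of the operator identity. Yours needs no curvature, no surface divergence and no covariance argument. It uses only the uniform $C^2$ control of $(x,v)\mapsto\rho_{K+x}(v)$ that both proofs rely on anyway (Lemma~\ref{lem:smooth_boundary_implies_smooth_rho}).

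Your values $F_v'(\rho)=\frac n2\rho^{n-1}$ and $F_v''(\rho)=\frac{n(n-p-2)}{4}\rho^{n-2}$, and the Jacobian conversion to $\mathcal{M}_K$, are all correct. One notational point: avoid using $n$ both for the dimension and for the normal $n_K(y)$, since this makes formulas such as $\nabla f_v(0)=n/(v\cdot n)$ ambiguous.
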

	Since \(\mathcal M_K\) is positive definite, formula
\eqref{eq_hessian_gKp} shows that \(g_{K,p}\) has a strict local maximum at the
origin for every \(p>0\).

\begin{remark}
Although in this paper we work with \(p>0\), the Hessian computation remains valid if the radial combination is defined by the same formula for
\(p\in\mathbb R\setminus\{0\}\). In this extended range,
\[
H g_{K,p}(0)=-\frac{n+p}{4}\mathcal M_K.
\]
Thus the origin is a strict local maximum whenever \(p>-n\) and \(p\neq0\). At the exceptional value
\(p=-n\), the function \(g_{K,-n}\) is constant. Indeed, in that case the
radial combination is the \(n\)-radial mean and
\[
\rho_{K_{x,-n}}(v)^n
=
\frac{\rho_K(v)^n+\rho_{K+x}(v)^n}{2}.
\]
Therefore
\[
g_{K,-n}(x)
=
\frac12\operatorname{vol}(K)
+
\frac12\operatorname{vol}(K+x)
=
\operatorname{vol}(K).
\]
This extended parameter range will not be used in the sequel.
\end{remark}
	Define the level sets of the generalized covariogram by
	\begin{equation}
		L(p, \delta) = \{x \in \operatorname{int}(-K): \vol{K} - g_{K,p}(x) \leq \delta\}.
	\end{equation}

	The next theorem is an analog of the classical result of convergence of level sets of the covariogram function.
	\begin{theorem}\label{thm:level_sets_convergence}
		Let $K \subset \R^n$ be a convex body with $C^2$ boundary containing the origin in its interior, and assume $p > 0$.
		Then there exists $\rho > 0$ such that:
		\begin{equation}
			\lim_{\delta \to 0^+}\frac{L(p, \delta)\cap B_\rho}{\sqrt{\delta}} = c_{n,p}\;\Pi_2^{\circ} K,
		\end{equation}
		where the convergence is in the Hausdorff metric, and $c_{n,p} = \left(\frac{8}{n+p}\right)^{1/2}$.
	\end{theorem}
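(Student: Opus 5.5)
Our approach is to write $f(x)=\vol{K}-g_{K,p}(x)$ and use Theorem~\ref{thm:second_derivative_covariogram_simplified} together with a second-order Taylor expansion at the origin. The facts needed, which are established in Section 3 and which we take as available, are three. First, $g_{K,p}$ is $C^2$ in a neighborhood of $0$. Second, $f(0)=0$. Third, $\nabla f(0)=0$; this follows from the symmetry $g_{K,p}(x)=g_{K,p}(-x)$, since $K_{-x,p}=K_{x,p}-x$ because the radial mean commutes with translation of both bodies. Taylor's theorem then gives
\[
f(x)=\tfrac12 x^t\Big(\tfrac{n+p}{4}\mathcal M_K\Big)x+o(|x|^2)=\tfrac{n+p}{8}\,x^t\mathcal M_Kx+o(|x|^2).
\]
Set $Q(x)=\frac{n+p}{8}x^t\mathcal M_Kx$. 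Then $\{Q\le1\}=c_{n,p}\Pi_2^\circ K$ with $c_{n,p}=(8/(n+p))^{1/2}$, so this is the candidate limit body.

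The first step is to choose $\rho$. Since $\mathcal M_K$ is positive definite, $Q(x)\ge\lambda|x|^2$ for some $\lambda>0$. Pick $\rho$ so that $B_\rho\subset\operatorname{int}(-K)$ lies in the $C^2$ neighborhood and $|f(x)-Q(x)|\le\varepsilon(|x|)|x|^2$ there, where $\varepsilon(r)\to0$ and $\varepsilon(\rho)\le\lambda/2$. On $B_\rho$ this gives $\frac12 Q\le f\le\frac32 Q$. Consequently $L(p,\delta)\cap B_\rho\subset\{Q\le2\delta\}$, and for $\delta$ small this set lies well inside $B_\rho$; in particular $L(p,\delta)\cap B_\rho$ has radius $O(\sqrt\delta)$.

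The second step is to rescale. Put $x=\sqrt\delta\,y$ and $A_\delta=\delta^{-1/2}(L(p,\delta)\cap B_\rho)$. For $y\in A_\delta$ the first step gives $|y|\le C$, so $|x|\le C\sqrt\delta$. On such points
\[
f(\sqrt\delta y)/\delta=Q(y)+\eta_\delta(y),\qquad |\eta_\delta(y)|\le\varepsilon(C\sqrt\delta)C^2\to0 .
\]
It follows that $\{Q\le1-\eta\}\subset A_\delta\cap B_C\subset\{Q\le1+\eta\}$ with $\eta\to0$. For the lower inclusion we also use that $\{Q\le1\}\subset B_C$ and that $\sqrt\delta\{Q\le 1\}\subset B_\rho$ for small $\delta$. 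The sets $\{Q\le1\pm\eta\}=(1\pm\eta)^{1/2}c_{n,p}\Pi_2^\circ K$ converge in the Hausdorff metric to $c_{n,p}\Pi_2^\circ K$. Since $A_\delta$ is compact and squeezed between them, $d_H(A_\delta,c_{n,p}\Pi_2^\circ K)\to0$. Note that $A_\delta$ need not be convex; Hausdorff distance between compact sets suffices for the sandwich argument.

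The main obstacle is the regularity input. We need $g_{K,p}$ to be genuinely $C^2$, or at least twice Fréchet differentiable, at $0$ with vanishing gradient, so that the remainder is $o(|x|^2)$ uniformly in direction. A Hessian obtained only from directional second derivatives would not give this uniformity. If Section 3 supplies only twice differentiability at $0$, we instead use the Peano form of Taylor's theorem, which still gives a uniform $o(|x|^2)$ remainder under Fréchet differentiability. Our first step would be to verify this from the explicit formula
\[
g_{K,p}(x)=\frac1n\int_{S^{n-1}}\Big(\frac{\rho_K^{-p}+\rho_{K+x}^{-p}}2\Big)^{-n/p}dv,
\]
using that $\rho_{K+x}(v)$ is $C^2$ in $(x,v)$ near $x=0$ because $\partial K$ is $C^2$ and $0\in\operatorname{int}K$ (implicit function theorem), and then differentiating under the integral.
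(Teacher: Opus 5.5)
Your overall scheme matches the paper's proof, which applies Lemma~\ref{lem:convergence_of_level_sets} to $g_{K,p}$ with $H=-\frac{n+p}{4}\mathcal M_K$. The steps are the same: a second-order Taylor expansion, a choice of $\rho$ giving $\frac12Q\le f\le\frac32Q$ on $B_\rho$, rescaling by $\sqrt\delta$, and squeezing between $(1\pm\eta)^{1/2}c_{n,p}\Pi_2^\circ K$. The $C^2$ regularity you flag as the main obstacle is exactly Proposition~\ref{prop:covariogram_is_Ck}.

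The one step that fails is your justification of $\nabla f(0)=0$. The identity $K_{-x,p}=K_{x,p}-x$ would require $(A\sumTilde B)+z=(A+z)\sumTilde(B+z)$. But the radial mean is built from radial functions centred at the origin, and translating a body does not act pointwise on its radial function. So radial means do not commute with translations, and $g_{K,p}$ is not even in general.

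For a counterexample, take $n=1$, $K=[-1,2]$ and $p=1$. The radial mean is then the harmonic mean $M(s,t)=2st/(s+t)$, and $g_{K,1}(x)=M(2,2+x)+M(1,1-x)$. Hence $g_{K,1}(1/2)=\frac{20}{9}+\frac23=\frac{26}{9}$, whereas $g_{K,1}(-1/2)=\frac{12}{7}+\frac65=\frac{102}{35}$. Evenness does hold when $K=-K$, by equivariance under the linear map $-I$, and in the limit $p\to\infty$, but not for general $K$.

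The conclusion $\nabla g_{K,p}(0)=0$ is still true, but it comes from a first-variation computation, namely Proposition~\ref{prop:first_derivative_covariogram}. By Corollary~\ref{cor:first_derivative_Kt} and Lemma~\ref{lem:jacobian_change_of_coordinates}, $\left.\frac{d}{dt}g_{K,p}(tu)\right|_{t=0}=\frac12\int_{\partial K}u\cdot n_K(x)\,dS(x)$, which vanishes by the divergence theorem. Replace the symmetry argument by this citation and the rest of your proof goes through as written.
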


	Denote by $\delta(x, p)$, or just $\delta$, the difference $\vol{K} - g_{K,p}(x)$. Theorem \ref{thm:level_sets_convergence}  describes the fixed-\(p\) infinitesimal behavior of the level sets. We
now study the simultaneous regime in which \(p\to\infty\) and \(\delta=\delta(p)\to0\). If we look at the Taylor expansion of the covariogram and its generalized version we have a hint of what happens. For the usual one we have directional derivatives in the origin, while in the generalized version we have that the first derivative is $0$, while its second one explodes as $p \to \infty$. 

	
	If we consider both limits simultaneously, we observe the following interesting phenomenon.
	\begin{theorem}\label{thm:level_sets_coupled_convergence}
		Let \(K\subset \mathbb R^n\) be a convex body with \(C^3\) boundary
which contains the origin in its interior. Let \(\delta=p^\alpha\), where
\(\alpha<0\). Then the following limits hold in the Hausdorff metric:

\begin{enumerate}
\item \label{thm:level_sets_coupled_convergence_case1} If \(\alpha>-1\), then
\[
\lim_{p\to\infty}p^{-\alpha}L(p,p^\alpha)=\Pi^\circ K.
\]

\item \label{thm:level_sets_coupled_convergence_case3}If \(\alpha=-1\), then
\[
\lim_{p\to\infty}pL(p,p^{-1})=\mathcal T_K,
\]
where
\[
\mathcal T_K=\{y\in\mathbb R^n:\Phi_K(y)\le1\}
\]
and
\[
\Phi_K(y)
=
\int_{\partial K}
(x\cdot n_K(x))
\log\cosh\left(
\frac{y\cdot n_K(x)}
{2x\cdot n_K(x)}
\right)
\,dS(x).
\]

\item \label{thm:level_sets_coupled_convergence_case2}If \(\alpha<-1\), then
\[
\lim_{p\to\infty}p^{\frac{1-\alpha}{2}}L(p,p^\alpha)
=
\sqrt 8\,\Pi_2^\circ K.
\]
\end{enumerate}
	\end{theorem}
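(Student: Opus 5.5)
The plan is to derive a single uniform asymptotic formula for the deficit \(\delta(x,p)=\operatorname{vol}(K)-g_{K,p}(x)\), valid simultaneously in \(p\to\infty\) and \(x\to0\). All three regimes will then be read off from the rescalings of this one formula.

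\textbf{Step 1: reduction to a boundary integral.} In polar coordinates,
\[
g_{K,p}(x)=\frac1n\int_{S^{n-1}}\rho_{K_{x,p}}(v)^n\,dv .
\]
Write \(\rho_{K+x}(v)=\rho_K(v)(1+\varepsilon(v,x))\). A first-order expansion of the radial function of a translate, transported to \(\partial K\) through \(z=\rho_K(v)v\) and \(dv=\rho_K^{-n}(z\cdot n_K)\,dS\), gives
\[
\varepsilon=\frac{x\cdot n_K(z)}{z\cdot n_K(z)}+O(|x|^2).
\]
The \(C^2\) and \(C^3\) hypotheses keep the error terms uniform. Then
\[
\rho_{K_{x,p}}^n=\rho_K^n\Bigl(\tfrac{1+(1+\varepsilon)^{-p}}{2}\Bigr)^{-n/p}.
\]
The key identity is \(\operatorname{vol}(K)=\tfrac12\operatorname{vol}(K)+\tfrac12\operatorname{vol}(K+x)\). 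Using it,
\[
\delta=\frac1n\int_{S^{n-1}}\rho_K^n\Bigl[\tfrac{1+(1+\varepsilon)^n}{2}-\Bigl(\tfrac{1+(1+\varepsilon)^{-p}}{2}\Bigr)^{-n/p}\Bigr]dv .
\]
This is the same cancellation that makes the first derivative vanish in Theorem \ref{thm:second_derivative_covariogram_simplified}.

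\textbf{Step 2: the pointwise asymptotic.} Set \(s=p\varepsilon\). As \(p\to\infty\) with \(\varepsilon\to0\),
\[
(1+\varepsilon)^{-p}=e^{-s}(1+O(p\varepsilon^2)),
\]
so
\[
\Bigl(\tfrac{1+e^{-s}}{2}\Bigr)^{-n/p}=\exp\Bigl(\tfrac np\log\tfrac{2}{1+e^{-s}}\Bigr).
\]
Expanding, the bracket becomes
\[
\frac{n}{p}\Bigl[\tfrac s2-\log\tfrac{2}{1+e^{-s}}\Bigr]+\text{error}=\frac np\log\cosh(s/2)+\text{error}.
\]
Here I use \(\tfrac s2+\log\tfrac{1+e^{-s}}{2}=\log\cosh(s/2)\), and the linear terms \(n\varepsilon/2\) have already cancelled against \((1+\varepsilon)^n\). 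I therefore expect
\[
\delta(x,p)=\frac1p\Phi_K(px)\,(1+o(1))+R,
\]
where the remainder \(R\) is controlled by \(|x|^2+|x|\min(1,p|x|)\cdot o(1)\), uniformly in \(p|x|\) on compact sets. To obtain this, one checks that \(\varepsilon\) is linear in \(x\) and that \(\Phi_K(y)=\int (x\cdot n_K)\log\cosh\bigl(\tfrac{y\cdot n_K}{2x\cdot n_K}\bigr)dS\) is exactly \(\int \rho_K^n\log\cosh(p\varepsilon/2)\,dv\) after the change of variables.

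\textbf{Step 3: reading off the three regimes.} In each case I substitute \(x=p^{\gamma}y\) with the natural scale.
\begin{itemize}
\item[(1)] For \(\alpha>-1\), take \(x=p^\alpha y\). Then \(s\sim p^{1+\alpha}\to\infty\), and \(\log\cosh(s/2)\sim |s|/2\). Hence \(p^{-\alpha}\delta\to h_{\Pi K}(y)\), which is the asymptotic \(\lim_\lambda \Phi_K(\lambda y)/\lambda\) noted in the introduction, and the level set \(\delta\le p^\alpha\) tends to \(\Pi^\circ K\).
\item[(2)] For \(\alpha=-1\), take \(x=y/p\). Then \(p\,\delta\to\Phi_K(y)\), giving \(\mathcal T_K\).
\item[(3)] For \(\alpha<-1\), take \(x=p^{(\alpha-1)/2}y\). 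Then \(s\to0\), and \(\log\cosh(s/2)\approx s^2/8\). Thus \(\delta\approx \tfrac{p}{8}\,x^t\mathcal M_Kx\), and the level set at \(p^\alpha\) becomes \(p^{(\alpha-1)/2}\sqrt8\,\Pi_2^\circ K\).
\end{itemize}
In all three cases, convergence of the rescaled deficits to a convex, coercive limit function, locally uniformly, plus strict monotonicity along rays, yields Hausdorff convergence of the sublevel sets. This is the standard argument presumably also used for Theorem \ref{thm:level_sets_convergence}.

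\textbf{Step 4: the main obstacle.} The hard part is uniformity of the error in Step 2 across all scales of \(s\). Specifically, the \(O(p\varepsilon^2)\) correction in \((1+\varepsilon)^{-p}\) must be small compared to the main term. In regime (1) this requires \(p\cdot p^{2\alpha}\ll p^{\alpha}\), that is, \(\alpha<-1\)?! That fails, so a cruder bound is needed there. Instead I would use the exact inequality
\[
\bigl|\min(1,(1+\varepsilon)^n)^{\cdot}-\text{mean}\bigr|\lesssim \tfrac1p\log 2,
\]
since the \((-p)\)-mean differs from the minimum by at most a factor \(2^{1/p}\). I would then compare with the classical covariogram, where \(\delta_{\mathrm{classical}}\sim |x|h_{\Pi K}\). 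The error \(O(1/p)=o(p^\alpha)\) holds precisely when \(\alpha>-1\). For cases (2) and (3), \(p\varepsilon^2=O(p|x|^2)=o(1/p)\) and \(o(p^\alpha)\) respectively, both of which check out. One also needs to localize away from the far region \(x\in\operatorname{int}(-K)\setminus B_\rho\), where \(\delta\) is bounded below uniformly in large \(p\) by comparison with the classical covariogram.
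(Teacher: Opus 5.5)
Your Step 1 reduction is correct, and it is a genuinely different route from the paper's. The identity $\operatorname{vol}(K)=\tfrac12\operatorname{vol}(K)+\tfrac12\operatorname{vol}(K+x)$ removes the linear term at once and leads to $\delta(x,p)\approx p^{-1}\Phi_K(px)$. The paper instead treats the three regimes separately:
\begin{enumerate}
\item for $\alpha>-1$, the comparison $g_K\le g_{K,p}\le 2^{n/p}g_K$;
\item for $\alpha=-1$, a direct limit of $p\Delta_p(y/p)$, with the linear term removed by the divergence theorem;
\item for $\alpha<-1$, the $C^3$ Taylor bound $\Delta_p(x)=\tfrac{n+p}{8}x^t\mathcal M_Kx+O(p^2|x|^3)$.
\end{enumerate}
However, your error accounting in Step 4 is wrong. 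By your own criterion, cases (2) and (3) fail just as case (1) does. For $x=y/p$ one has $p|x|^2=|y|^2/p$, which is of the same order as the main term $p^{-1}\Phi_K(y)$, not $o(1/p)$. For $x=p^{(\alpha-1)/2}y$ one has $p|x|^2=p^{\alpha}|y|^2$, which is of order $p^\alpha$.

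The criterion itself is too pessimistic. The quantity $(1+\varepsilon)^{-p}$ enters $\delta$ only through $\tfrac np\log\tfrac{1+(1+\varepsilon)^{-p}}{2}$, and $\sigma\mapsto\log(1+e^{-\sigma})$ is $1$-Lipschitz. So replacing $p\log(1+\varepsilon)$ by $p\varepsilon$ costs only $\tfrac np\cdot Cp\varepsilon^2=O(\varepsilon^2)$. Now let $\ell=-\tfrac1p\log\tfrac{1+(1+\varepsilon)^{-p}}{2}$, the logarithm of the $(-p)$-mean of $1$ and $1+\varepsilon$. Since $|\ell|\le|\log(1+\varepsilon)|$, you get $e^{n\ell}=1+n\ell+O(\varepsilon^2)$. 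The $\tfrac12$-Lipschitz bound for $s\mapsto\log\cosh(s/2)$ then lets you pass from $\varepsilon$ to its linear part. Together these give
\[
\bigl|\delta(x,p)-p^{-1}\Phi_K(px)\bigr|\le C|x|^2,\qquad |x|\le\rho,\ p>0.
\]
This holds with no restriction on $p|x|$ and needs only a $C^2$ boundary. It gives locally uniform convergence in all three regimes, including $\alpha>-1$ via $|t|-\log 2\le\log\cosh t\le|t|$. So regime (1) does not actually fail, although your fallback there is exactly the paper's argument and is fine.

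The more serious gap is the eventual boundedness of the rescaled level sets. Hausdorff convergence requires it, and locally uniform convergence to a coercive limit does not provide it. You exclude $|x|\ge\rho$ by comparison with $g_K$, but not the intermediate range: $R/p\le|x|\le\rho$ for $\alpha=-1$, and $Rp^{(\alpha-1)/2}\le|x|\le\rho$ for $\alpha<-1$. Your remainder is asserted only for $p|x|$ in compact sets, which is precisely what does not reach this range.

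Appealing to ``strict monotonicity along rays'' does not close the gap. For the limit functions, monotonicity says nothing about the prelimit sets. For $t\mapsto\Delta_p(tx)$ it is unproved and not automatic: $\rho_{K_{tx,p}}(v)$ is not monotone in $t$ (its $t$-derivative at $0$ changes sign with $v$). Moreover, no analogue of the Brunn--Minkowski concavity of $g_K^{1/n}$ is available for $g_{K,p}$.

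The paper handles this with the localization estimate $\Delta_p(x)\ge c_1\min\{p|x|^2,|x|\}$ for small $|x|$ and large $p$. It obtains this from the monotonicity $\Delta_p\ge\Delta_q$ for $q\le p$, together with its Taylor bound. In your framework the same estimate follows from the uniform bound above and from $\Phi_K(z)\ge c\min\{|z|^2,|z|\}$. That lower bound holds for three reasons:
\begin{enumerate}
\item $\log\cosh t\ge c_0\min\{t^2,|t|\}$;
\item $x\cdot n_K(x)$ is bounded above and below on $\partial K$;
\item $\int_{\partial K}(u\cdot n_K)^2\,dS$ is bounded below for $u\in S^{n-1}$.
\end{enumerate}
Then $\Delta_p(x)\ge c\min\{p|x|^2,|x|\}-C|x|^2\ge\tfrac c2\min\{p|x|^2,|x|\}$ once $p\ge 2C/c$ and $|x|\le c/(2C)$. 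With this step added, your route becomes a complete proof. It is shorter than the paper's and dispenses with the $C^3$ assumption.
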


	Note that the order of the normalization term in this result diverges from the one established in Theorem~\ref{thm:level_sets_convergence}. This happens because the Hessian of $g_{K, p}$ has a dependence on $p$ which is not taken into account in Theorem~\ref{thm:level_sets_convergence}. \\
	\begin{figure}[htbp]
		\centering
		\begin{tikzpicture}[scale=1.5]
		\draw[->, thick] (-3,0) -- (3,0) node[anchor=west] {$x$};
		\draw[->, thick] (0,-0.2) -- (0,3.8);
		
		\coordinate (Peak) at (0,3.15);
		\fill (Peak) circle (1.5pt);
		\node[anchor=south west, font=\small] at (Peak) {$\vol{K}$};
		
		\draw[thick, dashed, gray!80] (-2.5, 0.5) -- (0, 3.15) -- (2.5, 0.5);
		\node[gray!80, anchor=west, font=\footnotesize] at (1.9, 0.4) {$g_K(x)$};
		
		\draw[very thick, blue, domain=-2.6:2.6, samples=100] plot (\x, {3.15 - 0.424*\x*\x});
		\node[blue, anchor=west, font=\footnotesize] at (1.6, 2.1) {$g_{K,p}(x)$};
		
		\coordinate (C1_L) at (-0.84, 2.85);
		\coordinate (C1_R) at (0.84, 2.85);
		
		\draw[thick, orange, <->] (C1_L) -- (C1_R);
		
		\node[orange, anchor=west, font=\footnotesize] at (1.0, 2.95) {Upper Cut: $\delta_1 = p^\alpha$ ($\alpha < -1$)};
		\node[orange, anchor=west, font=\tiny\bfseries] at (0, 2.65) {$\Pi_2^\circ K$};
		
		\draw[gray, <->] (0.15, 3.15) -- (0.15, 2.85) node[midway, right, font=\tiny, text=black] {$\delta_1$};
		
		\coordinate (C3_L) at (-1.91, 1.6);
		\coordinate (C3_R) at (1.91, 1.6);
		
		\draw[thick, violet, <->] (C3_L) -- (C3_R);
		
		\node[violet, anchor=west, font=\footnotesize] at (1.95, 1.7) {Intermediate Cut: $\delta_2 = p^{-1}$ ($\alpha = -1$)};
		\node[violet, anchor=west, font=\tiny\bfseries] at (0, 1.4) {$\mathcal{T}_K$};
		
		\draw[gray, <->] (-0.20, 3.15) -- (-0.20, 1.6) node[midway, left, font=\tiny, text=black] {$\delta_2$};
		
		\coordinate (C2_L) at (-2.5, 0.5);
		\coordinate (C2_R) at (2.5, 0.5);
		
		\draw[thick, red, <->] (C2_L) -- (C2_R);
		
		\node[red, anchor=west, font=\footnotesize] at (2.55, 0.6) {Lower Cut: $\delta_3 = p^\alpha$ ($\alpha > -1$)};
		\node[red, anchor=west, font=\tiny\bfseries] at (0, 0.3) {$\Pi^\circ K$};
		
		\draw[gray, <->] (-0.55, 3.15) -- (-0.55, 0.5) node[midway, left, font=\tiny, text=black] {$\delta_3$};
		
		\end{tikzpicture}
		\caption{Geometric interpretation of the level set cuts on $g_{K,p}$. In this image, the value of $p$ is fixed. Choosing to cut the graph at a distance $\delta = p^{\alpha}$ from the top: if $\alpha < -1$ (upper cut), the level sets converge to $\Pi_2^{\circ} K$; if $\alpha = -1$ (intermediate cut), they converge to the transition body $\mathcal{T}_K$; and if $-1 < \alpha < 0$ (lower cut), they converge to $\Pi^\circ K$, all after an appropriate rescaling.}
	\end{figure}

	This paper is organized as follows: In Section~\ref{sec:notation}, we establish the notation and preliminary results that will be used throughout the manuscript. In Section~\ref{sec:derivatives_of_g}, we study the differentiability of the generalized covariogram, establishing the existence of its first and second derivatives and providing their explicit computations; at the end of this section, we establish a preliminary version of Theorem~\ref{thm:second_derivative_covariogram_simplified} with a more complicated expression for $H_{g_{K,p}}(0)$. In Section~\ref{sec:equivalence}, we prove the equivalence between the operator $\mathcal{M}_K$ and an auxiliary operator, which allows us to simplify the expression for the Hessian obtained in the previous section. Finally, in Section~\ref{sec:level_sets}, we prove Theorems~\ref{thm:level_sets_convergence} and \ref{thm:level_sets_coupled_convergence}.

	\FloatBarrier
	\section{Notation and Preliminaries}\label{sec:notation}

	In this paper, we restrict ourselves to the $n$-dimensional Euclidean space $\R^n$. For any $u, v \in \R^n$, we denote by $u\cdot v$ and $u \otimes v$, respectively, the inner product and tensor product of $u$ and $v$. The unit sphere is denoted by $S^{n-1}$ and the volume of the unit ball by $\omega_n$. We use $\mathrm{d}v$ and $\mathrm{d}S$ to denote the $(n-1)$-dimensional Hausdorff measure on the unit sphere and on $\partial K$, respectively. We say that a function $f$ is $\mathcal{O}(g)$ if there exists a constant $C > 0$ such that $|f(x)| \leq C|g(x)|$ for all $x$ in the domain of $f$ and $g$, and we say that $f$ is $o(g)$ if $\lim_{x \to \infty} f(x)/g(x) = 0$.
	
	Let $K$ be a convex body, we define the support function of $K$, denoted by $h_K: \R^n \to \R$, as:
	\begin{equation}
		h_K(u) = \sup \{ x \cdot u : x \in K \}, \quad u \in \R^n.
	\end{equation}

	Let $K$ be a convex body with $C^2$ boundary $M=\partial K$. The shape operator $S_x$ at $x$ is the linear map $S_x: T_xM \to T_xM$ defined by $S_x(v) = \mathrm{d}n_{K(x)}(v)$ for $v \in T_xM$. The second fundamental form $\II_x$ at $x$ is the bilinear form on $T_xM$ defined by $\II_x(u,v) = u \cdot S_x(v)$, we use the positive sign convention consistent with \cite{schneider2013convex}.  We denote by $\nabla$ the Levi-Civita connection on $M$; the covariant derivative of a vector field $Y$ with respect to another vector field $X$ is denoted by $\nabla_X Y$.

	To convert an integral over the boundary of a convex body into an integral over the sphere (or vice versa), the following result is essential. For a proof we refer the reader to Hug \cite{hug1996curvature}.
	\begin{lemma}\label{lem:jacobian_change_of_coordinates}
		Let $K \subset \R^n$ be a convex body containing the origin in its interior, and let $f: S^{n-1} \to \partial K$ be the radial map defined by $f(v) = \rho_K(v)v$. Then $f$ is differentiable for almost all $v \in S^{n-1}$ with respect to the $(n-1)$-dimensional Hausdorff measure, and its Jacobian determinant is given by:
		\begin{equation}
			Jf(v) = \frac{\rho_K(v)^{n-1}}{v\cdot n_K\left(\rho_K(v)v\right)}
		\end{equation}
		In other words, for every continuous function $\phi$ on the boundary $\partial K$,
		\begin{equation}
			\int_{\partial K} \phi(x) \mathrm{d}S(x) = \int_{S^{n-1}} \phi\left(\rho_K(v)v\right) \frac{\rho_K(v)^{n-1}}{v\cdot n_K\left(\rho_K(v)v\right)} \mathrm{d}S(v).
		\end{equation}
	\end{lemma}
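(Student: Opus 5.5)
The plan is to avoid differentiating the radial map directly and to derive the Jacobian from the identity that expresses the volume of a cone in two ways. First I record the regularity facts needed. Since \(K\) is convex and contains a ball \(rB\) around the origin, \(\rho_K\) is Lipschitz on \(S^{n-1}\) and bounded between \(r\) and \(R=\max_{K}|x|\). The radial map \(f(v)=\rho_K(v)v\) is then a bi-Lipschitz homeomorphism \(S^{n-1}\to\partial K\), with inverse \(x\mapsto x/|x|\). By Rademacher's theorem, \(f\) is differentiable \(\mathcal H^{n-1}\)-almost everywhere. Moreover, a convex boundary has a unique outer normal \(n_K(x)\) at \(\mathcal H^{n-1}\)-almost every \(x\), and for every outer normal we have \(x\cdot n_K(x)=h_K(n_K(x))\ge r>0\). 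Since \(f\) and \(f^{-1}\) are Lipschitz, these null sets correspond to one another, so all the quantities in the formula are defined almost everywhere and the integrand is bounded.

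The key step is the cone-volume identity. For a Borel set \(\omega\subset S^{n-1}\), let \(C_\omega=\{tf(v):v\in\omega,\ 0\le t\le1\}\). Integration in polar coordinates gives
\[
\operatorname{vol}(C_\omega)=\frac1n\int_\omega \rho_K(v)^n\,\mathrm dv .
\]
For an open cap \(\omega\) with nice boundary, \(C_\omega\) is a Lipschitz domain. Applying the divergence theorem to the field \(x\mapsto x\), whose divergence is \(n\), yields
\[
n\operatorname{vol}(C_\omega)=\int_{f(\omega)} x\cdot n_K(x)\,\mathrm dS(x),
\]
because on the lateral part of \(\partial C_\omega\) the position vector is tangent, so \(x\cdot\nu=0\) there. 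Hence the two finite Borel measures \(\omega\mapsto\int_{f(\omega)}x\cdot n_K\,\mathrm dS\) and \(\omega\mapsto\int_\omega\rho_K^n\,\mathrm dv\) agree on such caps. They therefore agree on all Borel sets, since caps generate the Borel \(\sigma\)-algebra and form a \(\pi\)-system up to null boundaries. It follows that the pushforward of \(\mathrm dS\) under \(f^{-1}\) equals \(\rho_K(v)^n/(f(v)\cdot n_K(f(v)))\,\mathrm dv\). Using \(f(v)\cdot n_K=\rho_K(v)\,v\cdot n_K\), this equals \(\rho_K(v)^{n-1}/(v\cdot n_K(\rho_K(v)v))\,\mathrm dv\). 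Integrating a continuous \(\phi\) against both sides gives the stated change of variables, and the density coincides with the Jacobian \(Jf\) by the area formula for the Lipschitz map \(f\).

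As a consistency check at points of differentiability, I would also verify the formula directly. For \(w\in v^\perp\) we have \(df_v(w)=\rho_K w+(\nabla\rho_K\cdot w)v\). The image is the tangent hyperplane \(n_K^\perp\), and projecting along \(v\) onto \(v^\perp\) recovers \(\rho_K w\). The oblique projection between the hyperplanes \(n_K^\perp\) and \(v^\perp\) scales \((n-1)\)-volume by \(v\cdot n_K\), so \(Jf=\rho_K^{n-1}/(v\cdot n_K)\).

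The main obstacle I expect is the justification of the divergence theorem on the non-smooth cones \(C_\omega\), together with the measure-theoretic upgrade from caps to all Borel sets. I would handle the first point by approximation: take \(\omega\) to be geodesic balls, for which \(C_\omega\) is a Lipschitz domain and the Gauss–Green theorem applies directly. The second point follows from the uniqueness of measures agreeing on a generating \(\pi\)-system.
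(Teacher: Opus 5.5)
The paper does not prove this lemma; it quotes it from Hug \cite{hug1996curvature}, so your argument would be a self-contained replacement. It is essentially sound, but one step is justified incorrectly. Geodesic caps are not closed under intersection (two caps meet in a lens, not a cap), so they do not form a $\pi$-system, and you cannot invoke the $\pi$--$\lambda$ uniqueness theorem as stated.

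The conclusion still holds. Both measures $\omega\mapsto\int_{f(\omega)}x\cdot n_K\,dS$ and $\omega\mapsto\int_\omega\rho_K^n\,dv$ are absolutely continuous with respect to $dv$. For the first, this is because the Lipschitz map $f$ sends $dv$-null sets to $\mathcal H^{n-1}$-null sets. Agreement on all caps then forces the densities to agree a.e.\ by the Lebesgue differentiation theorem on $S^{n-1}$.

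Alternatively, replace caps by closed spherical convex polytopes contained in open hemispheres. These do form a $\pi$-system generating the Borel sets. For each of them, $C_\omega=K\cap\operatorname{pos}(\omega)$ is a convex body whose lateral facets lie in hyperplanes through the origin, so the Gauss--Green theorem applies at once. This avoids checking the Lipschitz character of $C_\omega$ at its rim and vertex, which the cap version requires.

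What you call a consistency check is already a complete proof, and the shorter one.
\begin{itemize}
\item At every $v$ where $f$ is differentiable (a.e.\ by Rademacher), the image of $df_v$ is orthogonal to every outer normal $n$ at $f(v)$. This is because curves in $\partial K$ through $f(v)$ stay in the half-space $\{z: z\cdot n\le h_K(n)\}$.
\item Let $P$ be the orthogonal projection onto $v^\perp$. The identity $P\circ df_v=\rho_K(v)\,\mathrm{id}_{v^\perp}$ gives $Jf(v)\,(v\cdot n_K(f(v)))=\rho_K(v)^{n-1}$.
\item The area formula for the injective Lipschitz map $f$ then yields the integral identity directly.
\end{itemize}
This also shows the normal is unique at such points, since $df_v$ has rank $n-1$.

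The cone-volume route costs the divergence theorem and a measure-uniqueness step. Its advantage is that it produces the lemma in the form $\rho_K(v)^n\,dv=(x\cdot n_K(x))\,dS(x)$, which is exactly how it is used in the critical case $\alpha=-1$ of Theorem~\ref{thm:level_sets_coupled_convergence}.
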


	\section{The First and Second Derivatives of \texorpdfstring{$g_{K,p}$}{g\_\{K,p\}}}
	\label{sec:derivatives_of_g}
	\begin{lemma}\label{lem:smooth_boundary_implies_smooth_rho}
		Let $K$ be a $C^k$ convex body containing the origin in its interior. Then the function $(x, v) \mapsto \rho_{K_{x, p}}(v)$ is of class $C^k$ in $\operatorname{int} (-K) \times S^{n-1}$. Moreover those derivatives are uniformly bounded on any compact set $L\subset \operatorname{int} (-K) \times S^{n-1}$.
	\end{lemma}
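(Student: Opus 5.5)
We reduce everything to the regularity of the radial function of $K$ itself. By definition of the radial $(-p)$-combination,
\[
\rho_{K_{x,p}}(v)=\left(\frac{\rho_K(v)^{-p}+\rho_{K+x}(v)^{-p}}{2}\right)^{-1/p},
\]
and the map $(a,b)\mapsto ((a^{-p}+b^{-p})/2)^{-1/p}$ is $C^\infty$ on $(0,\infty)^2$. So it suffices to show two things. First, $(x,v)\mapsto\rho_{K+x}(v)$ is $C^k$ on $\operatorname{int}(-K)\times S^{n-1}$. Second, it takes values in a compact subset of $(0,\infty)$ on compact sets. Smoothness of $v\mapsto\rho_K(v)$ is the special case $x=0$.

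To show $C^k$ regularity of $\rho_{K+x}$, use the implicit function theorem. Near any point of $\partial K$, write $\partial K$ locally as the zero set of a $C^k$ function $F$ with $\nabla F\neq0$; for instance, take $F$ to be a local defining function, or use the gauge. The value $r=\rho_{K+x}(v)$ is characterized by
\[
G(x,v,r)=F(rv-x)=0 ,
\]
since $rv\in\partial(K+x)$ exactly when $rv-x\in\partial K$.

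The key point is transversality: we need $\partial_r G=\nabla F(rv-x)\cdot v\neq0$. Let $y=rv-x\in\partial K$, so $\nabla F(y)$ is a positive multiple of $n_K(y)$. Since $0\in\operatorname{int}(K+x)$, we have $-x\in\operatorname{int}K$. Convexity then gives
\[
n_K(y)\cdot(y-(-x))=n_K(y)\cdot rv>0,
\]
because the support function of $K$ in direction $n_K(y)$ is attained at $y$ and strictly exceeds its value at the interior point $-x$. Hence $v\cdot n_K(y)>0$.

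The implicit function theorem then gives a local $C^k$ solution $r(x,v)$. Uniqueness of the boundary crossing along each ray, which holds because $K+x$ is convex with $0$ in its interior, identifies this local solution with $\rho_{K+x}(v)$. Thus $\rho_{K+x}(v)$ is $C^k$ in $(x,v)$, with $v$ regarded in $\R^n\setminus\{0\}$ and then restricted to the sphere.

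For the uniform bounds, fix a compact $L\subset\operatorname{int}(-K)\times S^{n-1}$. On $L$:
\begin{itemize}
\item $\rho_{K+x}(v)$ is continuous and positive, so it is bounded above and below by positive constants;
\item $v\cdot n_K(\rho_{K+x}(v)v-x)$ is continuous and positive, so it is bounded below by a positive constant.
\end{itemize}
The derivatives of $r$ are obtained by differentiating $G=0$. They are polynomial expressions in derivatives of $F$ up to order $k$, divided by powers of $\partial_rG$. They are therefore continuous on $L$, hence bounded. Composing with the smooth combination map on a compact subset of $(0,\infty)^2$ preserves both the $C^k$ property and the bounds.

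The main obstacle is the transversality estimate together with its uniformity. This is where the hypothesis $x\in\operatorname{int}(-K)$ is essential: as $-x$ approaches $\partial K$, the quantity $v\cdot n_K$ can degenerate, and this is exactly why the bounds are only claimed on compact subsets.
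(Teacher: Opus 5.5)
Your proposal is correct and follows essentially the paper's route: reduce to $(x,v)\mapsto\rho_{K+x}(v)$ via smoothness of the $(-p)$-mean, apply the implicit function theorem to $F(rv-x)=0$ with a local defining function, and get the bounds from continuity on compact sets. The differences are minor: you verify transversality directly at every $x\in\operatorname{int}(-K)$ via $n_K(y)\cdot(y+x)>0$, whereas the paper does it at $x=0$ and then translates $K$; and your use of the two-variable mean map and of the uniqueness of the ray--boundary intersection is, if anything, slightly more careful than the paper's version.
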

	\begin{proof}
		It suffices to show that the function $(x, v) \mapsto \rho_{K+x}(v)$ is of class $C^k$, since the function
		\begin{equation}
			y \mapsto \left(\frac{y^{-p} + a^{-p}}{2}\right)^{-1/p}
		\end{equation}
		is in $C^\infty((0,\infty))$, for $a > 0$.
		
		Let $p(v_0) = \rho_K(v_0)v_0$. Since $K$ is of class $C^k$, there exists a $C^k$ defining function $\Phi$ in a neighborhood $U$ of $p(v_0)$ such that
		\begin{equation}
			\partial K \cap U = \{y\in U:\Phi(y) = 0\},\qquad \nabla\Phi(p(v_0))\neq0.
		\end{equation}
		Consider $F(x,v,t) = \Phi(tv-x)$. Note that $F(0, v_0, \rho_K(v_0)) = \Phi(\rho_K(v_0)v_0) = 0$. Moreover, since $\nabla\Phi(p(v_0))$ is normal to $\partial K$ at $p(v_0)$ and the origin is in the interior, the following transversality condition holds:
		\begin{equation}
			\partial_t F(0,v_0,\rho_K(v_0)) = \nabla\Phi(p(v_0))\cdot v_0\neq 0.
		\end{equation}
		By the implicit function theorem, there exists a neighborhood $V\subset \R^n \times S^{n-1}$ of $(0, v_0)$ and a unique $C^k$ function $g:V\to\R$ such that $g(0, v_0) = \rho_K(v_0)$ and $F(x, v, g(x, v)) = 0$ for all $(x, v)\in V$. By uniqueness, $\rho_{K+x}(v) = g(x, v)$ near $(0, v_0)$, thus $(x,v)\mapsto\rho_{K+x}(v)$ is $C^k$ near $(0, v_0)$.
		
		By translating $K$, it is enough to show the regularity for $x$ near $0$. Let $x_0$ be such that $0 \in \operatorname{int}(K+x_0)$. Since $K+x_0$ is a convex body containing the origin in its interior, by the previous argument the function
		\begin{equation}
			f(x, v) = \rho_{(K+x_0)+x}(v)
		\end{equation}
		is $C^k$ in a neighborhood of $(0, v_0)$. Now, consider the translation map $\psi(x, v) = (x-x_0, v)$. The composition
		\begin{equation}
			(f \circ \psi)(x, v) = f(x-x_0, v) = \rho_{K+x_0+(x-x_0)}(v) = \rho_{K+x}(v)
		\end{equation}
		is a composition of $C^k$ functions, and therefore is $C^k$ in a neighborhood of $x_0$, as required.

		The uniform boundedness of the derivatives follows directly from the compactness of $L$ and the continuity of the derivatives.
	\end{proof}

	\begin{proposition}\label{prop:covariogram_is_Ck}
		Let $K$ be a $C^k$ convex body containing the origin in its interior.
		Then the function $x \mapsto \vol{K_{x, p}}$ is of class $C^k$ in $\operatorname{int}(-K)$.
	\end{proposition}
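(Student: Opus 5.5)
We express the volume in polar coordinates. Since \(K_{x,p}\) is a star body with respect to the origin for every \(x\in\operatorname{int}(-K)\), we have
\[
\vol{K_{x,p}}=\frac1n\int_{S^{n-1}}\rho_{K_{x,p}}(v)^n\,\mathrm{d}v ,
\]
so \(g_{K,p}\) is a parameter-dependent integral over the compact manifold \(S^{n-1}\). The plan is to differentiate under the integral sign \(k\) times with respect to \(x\), using Lemma~\ref{lem:smooth_boundary_implies_smooth_rho} for both the regularity and the domination of the integrand.

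Fix \(x_0\in\operatorname{int}(-K)\) and choose a closed ball \(\overline{B}\subset\operatorname{int}(-K)\) centered at \(x_0\). The set \(L=\overline{B}\times S^{n-1}\) is compact and contained in \(\operatorname{int}(-K)\times S^{n-1}\). By Lemma~\ref{lem:smooth_boundary_implies_smooth_rho}, the map \((x,v)\mapsto\rho_{K_{x,p}}(v)\) is \(C^k\) on a neighborhood of \(L\), and all its partial derivatives in \(x\) up to order \(k\) are bounded on \(L\).

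The function \(t\mapsto t^n\) is smooth, and \(\rho_{K_{x,p}}\) is continuous and positive on \(L\), hence bounded there. By the chain rule (Faà di Bruno), every mixed partial derivative \(\partial_x^\gamma\big(\rho_{K_{x,p}}(v)^n\big)\) with \(|\gamma|\le k\) is a polynomial in the derivatives of \(\rho_{K_{x,p}}(v)\) of order at most \(|\gamma|\). It is therefore continuous on \(L\) and bounded by a constant \(C_\gamma\) independent of \((x,v)\). Since \(S^{n-1}\) has finite measure, the constant \(C_\gamma\) is an integrable dominating function.

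We then apply the standard theorem on differentiation under the integral sign (mean value theorem plus dominated convergence) inductively on the order \(|\gamma|\). This shows that \(\partial^\gamma g_{K,p}(x)=\frac1n\int_{S^{n-1}}\partial_x^\gamma\big(\rho_{K_{x,p}}(v)^n\big)\,\mathrm{d}v\) exists on \(B\) for all \(|\gamma|\le k\). Continuity of these derivatives follows from dominated convergence again, using the joint continuity of the integrand on \(L\); uniform continuity on the compact set \(L\) suffices. Since \(x_0\) was arbitrary, \(g_{K,p}\in C^k(\operatorname{int}(-K))\).

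The only delicate point is making sure the lemma applies uniformly. We need a compact neighborhood of \(x_0\) inside the open set \(\operatorname{int}(-K)\), so that \(0\in\operatorname{int}(K+x)\) for all \(x\) in it and the radial functions stay bounded away from \(0\) and \(\infty\). This is exactly what the choice of \(\overline{B}\) provides, and the rest is routine.
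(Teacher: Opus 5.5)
Your proposal is correct and follows essentially the same route as the paper: you write the volume with the polar formula, use Lemma~\ref{lem:smooth_boundary_implies_smooth_rho} to get $C^k$ regularity of $\rho_{K_{x,p}}(v)$ with $x$-derivatives up to order $k$ bounded uniformly in $v$ near $x_0$, and then differentiate under the integral sign by dominated convergence. You are somewhat more explicit than the paper in choosing the compact set $L=\overline{B}\times S^{n-1}$ and in checking that the resulting derivatives are continuous.
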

	
	\begin{proof}
		From Lemma \ref{lem:smooth_boundary_implies_smooth_rho}, the function $x \mapsto \rho_{K_{x, p}}(v)$ is of class $C^k$, and all its derivatives up to order $k$ are uniformly bounded in a neighborhood of $x_0\in \operatorname{int}(-K)$, with a bound independent of $v \in S^{n-1}$.
		
		Recall that the volume of $K_{x, p}$ can be expressed in terms of the radial function as
		\begin{equation}
			\vol{K_{x, p}} = \frac{1}{n}\int_{S^{n-1}} \rho_{K_{x, p}}(v)^n \, \mathrm{d}v.
		\end{equation}
		Since $\rho_{K_{x, p}}(v)$ and its derivatives depend smoothly on $x$ and are uniformly bounded in $v$, we may differentiate under the integral sign by the dominated convergence theorem. Hence, for every multi-index $\alpha$ with $|\alpha| \leq k$ and every $x \in \operatorname{int} (-K)$,
		\begin{equation}
			\partial^\alpha_x \vol{K_{x, p}} = \frac{1}{n}\int_{S^{n-1}} \partial^\alpha_x \big(\rho_{K_{x, p}}(v)^n\big) \, \mathrm{d}v. \qedhere
		\end{equation}
	\end{proof}
	
	\begin{lemma}\label{lem:first_derivative_translated}
		Let $K$ be a convex body with $C^1$ boundary, let $v\in S^{n-1}$, and let $x_0(v) = \rho_K(v)\,v$. Then:
		$\left. \frac{\partial}{\partial t}\rho_{K+tu}(v)\right|_{t=0}= \frac{u\cdot n_K(x_0(v))}{v\cdot n_K(x_0(v))}$
	\end{lemma}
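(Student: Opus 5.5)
The plan is to differentiate the identity that defines \(\rho_{K+tu}(v)\) implicitly and then evaluate at \(t=0\).

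\emph{Setting up the identity.} The point \(\rho_{K+tu}(v)v\) lies on \(\partial(K+tu)=\partial K+tu\). Hence
\[
\rho_{K+tu}(v)\,v-tu\in\partial K
\]
for all small \(|t|\). By Lemma~\ref{lem:smooth_boundary_implies_smooth_rho} with \(k=1\), the map \(t\mapsto r(t):=\rho_{K+tu}(v)\) is \(C^1\) near \(t=0\), and \(r(0)=\rho_K(v)\). The same lemma covers the case \(0\in\operatorname{int}K\); the general hypothesis is understood in this sense, since \(\rho_K\) must be positive.

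\emph{Differentiating.} Write \(\gamma(t)=r(t)v-tu\). This is a \(C^1\) curve lying in \(\partial K\) with \(\gamma(0)=x_0(v)\). Its velocity \(\gamma'(0)=r'(0)v-u\) is therefore tangent to \(\partial K\) at \(x_0(v)\), and so it is orthogonal to \(n_K(x_0(v))\):
\[
r'(0)\,v\cdot n_K(x_0(v))-u\cdot n_K(x_0(v))=0.
\]

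\emph{Solving for \(r'(0)\).} Because the origin lies in the interior of \(K\), we have \(v\cdot n_K(x_0(v))>0\). Indeed,
\[
x_0\cdot n_K(x_0)=h_K(n_K(x_0))>0,
\]
and \(x_0\) is a positive multiple of \(v\). We may therefore divide by \(v\cdot n_K(x_0(v))\), which yields the stated formula.

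An equivalent route avoids tangency. Take a local \(C^1\) defining function \(\Phi\) as in the proof of Lemma~\ref{lem:smooth_boundary_implies_smooth_rho}, differentiate \(\Phi(r(t)v-tu)=0\), and use the fact that \(\nabla\Phi\) is parallel to \(n_K\).

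\emph{Main point requiring care.} The only step needing justification is the differentiability of \(t\mapsto r(t)\). This comes from the implicit function theorem argument already given, namely the transversality condition \(\nabla\Phi\cdot v_0\neq0\), applied with \(C^1\) regularity. The remaining steps are routine.
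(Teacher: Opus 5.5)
Your proposal is correct and follows the paper's argument: both use that $\rho_{K+tu}(v)\,v-tu$ stays on $\partial K$, pair its $t$-derivative at $t=0$ with $n_K(x_0(v))$, and obtain $\varphi'(0)\,v\cdot n_K(x_0(v))-u\cdot n_K(x_0(v))=0$. Your version is slightly more careful than the paper's, since you justify the differentiability of $t\mapsto\rho_{K+tu}(v)$ via the implicit-function argument and explain why $v\cdot n_K(x_0(v))>0$ (using that the origin lies in the interior), both of which the paper leaves implicit.
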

	
	\begin{proof}
		Let us define $\varphi(t) = \rho_{K+tu}(v)$.
		\begin{align*}
			\varphi(t) &= \max\{s\geq 0: sv\in K+tu\}\\ &= \max\{s\geq 0: sv-tu \in K\}
		\end{align*}
		
		Therefore $sv-tu \in \partial K$ if $s=\varphi(t)$. By taking the derivative of equation $h_K(n_K(x_0(v))) = (sv-tu)\cdot n_K(x_0(v))$ and using the continuity of $n_K(x_0(v))$
		we arrive at $0 = \varphi'(0)( v\cdot n_K(x_0(v))) - u\cdot n_K(x_0(v)) $.
	\end{proof}
	\begin{corollary}\label{cor:first_derivative_Kt}
		Let $K$ be a convex body with $C^1$ boundary, let $v\in S^{n-1}$, and let $x_0(v) = \rho_K(v)\,v$. Then:

		$\left. \frac{\partial}{\partial t}\rho_{K_{tu,p}}(v)\right|_{t=0}= \frac{ u\cdot n_K(x_0(v))}{2v\cdot n_K(x_0(v))}$ for every $u\in S^{n-1}$
	\end{corollary}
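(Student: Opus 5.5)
The corollary follows from Lemma~\ref{lem:first_derivative_translated} and the chain rule applied to the radial $(-p)$-mean. For fixed $v\in S^{n-1}$, set $a=\rho_K(v)$ and $\varphi(t)=\rho_{K+tu}(v)$, so that $\varphi(0)=a$. By Definition~\ref{def:radial_lp_sum} with $\beta=-p$,
\[
\rho_{K_{tu,p}}(v)=F(\varphi(t)),\qquad F(y)=\left(\frac{a^{-p}+y^{-p}}{2}\right)^{-1/p}.
\]
Here the first summand does not depend on $t$, because $K$ itself is not translated. Since $0\in\operatorname{int}K$, we have $\varphi(t)>0$ for small $|t|$, and $F$ is $C^\infty$ on $(0,\infty)$. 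The function $\varphi$ is differentiable at $0$ by Lemma~\ref{lem:first_derivative_translated}, which applies under the $C^1$ hypothesis.

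Next I compute $F'(a)$. Differentiating gives
\[
F'(y)=-\frac1p\left(\frac{a^{-p}+y^{-p}}{2}\right)^{-1/p-1}\cdot\left(-\frac{p}{2}\,y^{-p-1}\right)
=\frac12\left(\frac{a^{-p}+y^{-p}}{2}\right)^{-1/p-1}y^{-p-1}.
\]
At $y=a$ the bracket equals $a^{-p}$. Its power $-1/p-1$ is therefore $a^{1+p}$, and
\[
F'(a)=\tfrac12\,a^{1+p}\,a^{-p-1}=\tfrac12.
\]
This reflects the general fact that a symmetric mean has derivative $1/2$ in each argument on the diagonal.

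By the chain rule,
\[
\left.\frac{\partial}{\partial t}\rho_{K_{tu,p}}(v)\right|_{t=0}=F'(a)\,\varphi'(0)=\frac{u\cdot n_K(x_0(v))}{2\,v\cdot n_K(x_0(v))},
\]
using the value of $\varphi'(0)$ from Lemma~\ref{lem:first_derivative_translated}. The denominator is positive because the origin is interior to $K$, so $x_0(v)\cdot n_K(x_0(v))=h_K(n_K(x_0(v)))>0$.

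There is no real obstacle here. The only points requiring care are that just one of the two bodies in the mean depends on $t$, and that the diagonal evaluation makes $F'(a)$ independent of both $p$ and $a$.
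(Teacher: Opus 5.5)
Your proof is correct and is essentially the paper's argument. The paper also differentiates the $(-p)$-mean by the chain rule, with only $\rho_{K+tu}(v)$ depending on $t$, and notes that at $t=0$ the factors $\rho_{K+tu}(v)^{-p-1}$ and $\rho_{K_{tu,p}}(v)^{1+p}$ cancel (your computation $F'(a)=\tfrac12$); it then substitutes $\varphi'(0)$ from Lemma~\ref{lem:first_derivative_translated}.
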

	\begin{proof}
		\begin{align*}
			\left.\frac{\partial}{\partial t}\rho_{K_{tu,p}}(v)\right\rvert_{t=0} &= \left. \frac{\partial}{\partial t}\left( \frac{\rho_{K}(v)^{-p} + \rho_{K+tu}(v)^{-p}}{2}\right)^{-1/p}\right\rvert_{t=0} \\
			&= \left. -\frac{1}{p}\left(-\frac{p}{2}\frac{\partial}{\partial t}\rho_{K+tu}(v)\rho_{K+tu}(v)^{-p-1}\right)\rho_{K_{tu,p}}(v)^{1+p}\right\rvert_{t=0}\\
			&= \frac{u \cdot n_K(x_0(v))}{2v \cdot n_K(x_0(v))} \qedhere
		\end{align*}
	\end{proof}

	For each $x\in \partial K$, define the operator $A_x$ by:
	\begin{equation}
		A_x = I - \frac{x \otimes n_K(x)}{x\cdot n_K(x)}.
	\end{equation} 
	Define $\mathcal{N}_K$ by the following integral:
	\begin{equation}
		\mathcal{N}_K = \int_{\partial K} A_x^t S_x A_x \, \mathrm{d}S(x).
	\end{equation}

	\begin{lemma}\label{lem:second_derivative_translated}
		Let $K \subset \R^n$ be a convex body with $C^2$ boundary, let $v \in S^{n-1}$, and let $x_0(v) = \rho_K(v)\,v$.
		Then:
		\begin{equation}
			\left.\frac{\partial^2}{\partial t^2}\rho_{K+tu}(v)\right|_{t=0}
			= -\frac{u^t Q_{x_0(v)} u}{v\cdot n_K(x_0(v))},
		\end{equation}
		where $S_{x_0(v)}$ is the shape operator at $x_0(v)$ and $Q_{x_0(v)} = A_{x_0(v)}^t S_{x_0(v)} A_{x_0(v)}$ is a positive semidefinite symmetric matrix.
	\end{lemma}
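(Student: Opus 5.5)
The plan is to reduce the statement to differentiating a constraint along a curve that lies on \(\partial K\). Set \(\varphi(t)=\rho_{K+tu}(v)\). By Lemma~\ref{lem:smooth_boundary_implies_smooth_rho} with \(k=2\), the map \(\varphi\) is \(C^2\) near \(t=0\). As in the proof of Lemma~\ref{lem:first_derivative_translated}, the curve
\[
\gamma(t)=\varphi(t)v-tu
\]
lies on \(\partial K\) for small \(|t|\), with \(\gamma(0)=x_0(v)\). Because \(\partial K\) is \(C^2\), the Gauss map \(n_K\) is \(C^1\), so \(t\mapsto n_K(\gamma(t))\) is \(C^1\). Since \(\gamma'(t)\) is tangent to \(\partial K\) at \(\gamma(t)\), we have the identity
\[
n_K(\gamma(t))\cdot\gamma'(t)=0 \qquad\text{for all small } t .
\]
Differentiating this once more is the heart of the argument. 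The main technical point is to justify that this identity can be differentiated; that is, that \(\gamma\) is \(C^2\) and \(n_K\circ\gamma\) is \(C^1\). Both follow from the \(C^2\) regularity established above.

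Differentiating at \(t=0\) gives
\[
\mathrm{d}n_K(\gamma'(0))\cdot\gamma'(0)+n_K(x_0)\cdot\gamma''(0)=0 .
\]
Here \(\mathrm{d}n_K(\gamma'(0))=S_{x_0}\gamma'(0)\), since \(\gamma'(0)\in T_{x_0}\partial K\). Also \(\gamma''(0)=\varphi''(0)v\), because the term \(-tu\) is linear in \(t\). Hence
\[
\varphi''(0)\,\bigl(v\cdot n_K(x_0)\bigr)=-\,\gamma'(0)\cdot S_{x_0}\gamma'(0)=-\II_{x_0}(\gamma'(0),\gamma'(0)).
\]
The division by \(v\cdot n_K(x_0)\) is legitimate: this quantity is positive because the origin lies in the interior of \(K\).

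Next I identify \(\gamma'(0)\) using Lemma~\ref{lem:first_derivative_translated}. That lemma gives \(\varphi'(0)=\frac{u\cdot n}{v\cdot n}\), where \(n=n_K(x_0)\). Since \(x_0=\rho_K(v)v\), we have \(v/(v\cdot n)=x_0/(x_0\cdot n)\). Therefore
\[
\gamma'(0)=\frac{u\cdot n}{x_0\cdot n}\,x_0-u=-\Bigl(I-\frac{x_0\otimes n}{x_0\cdot n}\Bigr)u=-A_{x_0}u ,
\]
using the convention \((x\otimes n)u=(n\cdot u)\,x\). Note that \(n\cdot A_{x_0}u=0\), so \(A_{x_0}\) maps into \(T_{x_0}\partial K\) and the expression \(S_{x_0}A_{x_0}u\) is meaningful. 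Substituting, the right-hand side becomes \(-u^tA_{x_0}^tS_{x_0}A_{x_0}u=-u^tQ_{x_0}u\). This yields the claimed formula.

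Finally, for the properties of \(Q_{x_0}\): symmetry follows from the self-adjointness of the shape operator, since \(\II\) is symmetric. Positive semidefiniteness follows because, for a convex body with the sign convention of \cite{schneider2013convex}, \(\II_{x_0}\) is positive semidefinite. Hence \(u^tQ_{x_0}u=\II_{x_0}(A_{x_0}u,A_{x_0}u)\ge0\) for every \(u\).
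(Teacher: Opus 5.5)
Your proposal is correct and follows the paper's proof essentially step for step. Both arguments differentiate the tangency identity $n_K(\gamma(t))\cdot\gamma'(t)=0$ along the boundary curve $\gamma(t)=\varphi(t)v-tu$, use $\gamma''(0)=\varphi''(0)v$, and identify $\gamma'(0)=-A_{x_0}u$; beyond this, you justify the $C^2$ regularity, the tangency of $A_{x_0}u$, and the symmetry and positive semidefiniteness of $Q_{x_0}$, which the paper leaves implicit.
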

	
	\begin{proof}
Define \(x(t)=\varphi(t)v-tu\). We have
\[
x'(0)=\varphi'(0)v-u=-A_{x_0(v)}u.
\]
Since \(x(t)\in \partial K\), we have
\[
x'(t)\cdot n_K(x(t))=0.
\]
Differentiating this identity with respect to \(t\), and evaluating at \(t=0\),
we obtain
\[
0
=
\left.\frac{d}{dt}
\big(x'(t)\cdot n_K(x(t))\big)\right|_{t=0}.
\]
By the product rule and the chain rule,
\[
0
=
x''(0)\cdot n_K(x_0(v))
+
x'(0)\cdot d n_K{}_{x_0(v)}(x'(0)).
\]
Using the convention \(S_x=d n_K{}_x\), this becomes
\[
0
=
x''(0)\cdot n_K(x_0(v))
+
x'(0)\cdot S_{x_0(v)}x'(0).
\]
Therefore
\[
x''(0)\cdot n_K(x_0(v))
=
-\mathrm{II}_{x_0(v)}(x'(0),x'(0)).
\]
Hence
\[
\begin{aligned}
(v\cdot n_K(x_0(v)))\varphi''(0)
&=
\varphi''(0)v\cdot n_K(x_0(v))  \\
&=
x''(0)\cdot n_K(x_0(v)) \\
&=
-\mathrm{II}_{x_0(v)}(x'(0),x'(0)) \\
&=
-\mathrm{II}_{x_0(v)}(-A_{x_0(v)}u,-A_{x_0(v)}u) \\
&=
-\mathrm{II}_{x_0(v)}(A_{x_0(v)}u,A_{x_0(v)}u) \\
&=
-(A_{x_0(v)}u)\cdot S_{x_0(v)}(A_{x_0(v)}u) \\
&=
-u^tA_{x_0(v)}^tS_{x_0(v)}A_{x_0(v)}u \\
&=
-u^tQ_{x_0(v)}u .
\end{aligned}
\]
Thus
\[
\varphi''(0)
=
-\frac{u^tQ_{x_0(v)}u}{v\cdot n_K(x_0(v))}.
\]
\end{proof}
	\begin{corollary}\label{cor:second_derivative_Kt}
		Let $K \subset \R^n$ be a convex body with $C^2$ boundary, let $v \in S^{n-1}$, and let $x_0(v) = \rho_K(v)\,v$.
		Then:
		\begin{equation}
			\left.\frac{\partial^2}{\partial t^2}\rho_{K_{tu,p}}(v)\right|_{t=0}
			= -\frac{1}{2}\frac{u^t Q_{x_0(v)} u}{v\cdot n_K(x_0(v))}- \frac{p+1}{4}\frac{(u\cdot n_K(x_0(v)))^2}{(v\cdot n_K(x_0(v)))^2} \rho_{K}^{-1}(v)
		\end{equation}
	\end{corollary}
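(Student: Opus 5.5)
The plan is to differentiate the explicit formula for the radial $(-p)$-mean twice in $t$ and then substitute the first- and second-order formulas already obtained for a single translate. Fix $v\in S^{n-1}$ and write $r(t)=\rho_{K+tu}(v)$, $r_0=r(0)=\rho_K(v)$ and $F(t)=\rho_{K_{tu,p}}(v)$. By Definition~\ref{def:radial_lp_sum},
\[
F(t)=G(t)^{-1/p},\qquad G(t)=\frac{r_0^{-p}+r(t)^{-p}}{2}.
\]
By Lemma~\ref{lem:smooth_boundary_implies_smooth_rho}, $r$ is $C^2$ near $t=0$, so every derivative taken below exists.

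First I differentiate once, exactly as in the proof of Corollary~\ref{cor:first_derivative_Kt}. Since $G'=-\tfrac p2\,r^{-p-1}r'$, we get
\[
F'=-\tfrac1p\,G^{-1/p-1}G'=\tfrac12\,F^{1+p}\,r^{-p-1}\,r'.
\]
This product form is the convenient one to differentiate again. The product rule gives
\[
F''=\tfrac12\Big[(1+p)F^{p}F'\,r^{-p-1}r'-(p+1)F^{1+p}r^{-p-2}(r')^2+F^{1+p}r^{-p-1}r''\Big].
\]

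Next I evaluate at $t=0$, where $F(0)=r(0)=r_0$. This gives $F^{1+p}r^{-p-1}=1$ and $F'(0)=\tfrac12 r'(0)$. The first two terms then combine to
\[
\tfrac{p+1}{2}r_0^{-1}r'(0)^2-(p+1)r_0^{-1}r'(0)^2=-\tfrac{p+1}{2}\,r_0^{-1}r'(0)^2,
\]
so that
\[
F''(0)=\tfrac12\,r''(0)-\tfrac{p+1}{4}\,\rho_K(v)^{-1}\,r'(0)^2.
\]

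Finally, I insert $r'(0)=\frac{u\cdot n_K(x_0(v))}{v\cdot n_K(x_0(v))}$ from Lemma~\ref{lem:first_derivative_translated} and $r''(0)=-\frac{u^tQ_{x_0(v)}u}{v\cdot n_K(x_0(v))}$ from Lemma~\ref{lem:second_derivative_translated}. This yields exactly the claimed formula.

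There is no real obstacle here. The only points needing care are the bookkeeping of exponents in the product rule and the cancellation $F(0)=r_0$, which is what collapses the first two terms into the single coefficient $-\tfrac{p+1}{4}$.
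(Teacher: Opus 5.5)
Your proposal is correct and follows the paper's proof. The paper likewise uses the chain rule to reduce the second derivative to $\tfrac12 r''(0)-\tfrac{p+1}{4}\rho_K(v)^{-1}r'(0)^2$, then substitutes Lemmas~\ref{lem:first_derivative_translated} and~\ref{lem:second_derivative_translated}; you simply spell out the product-rule bookkeeping and the cancellation coming from $F(0)=\rho_K(v)$, which the paper leaves implicit.
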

	\begin{proof}
		\begin{align*}
			\left. \frac{\partial^2}{\partial t^2}\rho_{K_{tu,p}}(v)\right\rvert_{t=0}
			&= \frac{1}{2}\frac{\partial^2}{\partial t^2}\rho_{K+tu}(v) - \frac{p+1}{4}\left(\frac{\partial}{\partial t}\rho_{K+tu}(v)\right)^2 \rho_{K+tu}^{-1}(v)\\
			&= -\frac{1}{2}\frac{ u^t Q_{x_0(v)} u}{v\cdot n_K(x_0(v))} -\frac{p+1}{4}\frac{(u \cdot n_K(x_0(v)))^2}{(v\cdot n_K(x_0(v)))^2} \rho_{K}^{-1}(v) \qedhere
		\end{align*}
	\end{proof}
	
	\begin{proposition}\label{prop:first_derivative_covariogram}
		Let $K$ be a convex body with $C^1$ boundary. Then $\nabla g_{K,p}(0) = 0$.
	\end{proposition}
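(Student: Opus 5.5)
The proof reduces to differentiating under the integral sign and then applying a divergence-type identity. By Proposition~\ref{prop:covariogram_is_Ck} (with $k=1$), $g_{K,p}$ is $C^1$ near the origin. For a fixed $u\in S^{n-1}$ we may therefore differentiate the volume formula under the integral sign:
\[
\left.\frac{\partial}{\partial t} g_{K,p}(tu)\right|_{t=0}
=\int_{S^{n-1}} \rho_{K}(v)^{n-1}\left.\frac{\partial}{\partial t}\rho_{K_{tu,p}}(v)\right|_{t=0}\,\mathrm{d}v .
\]
Here we used $\rho_{K_{0,p}}=\rho_K$. Substituting Corollary~\ref{cor:first_derivative_Kt} gives
\[
\nabla g_{K,p}(0)\cdot u=\frac12\int_{S^{n-1}}\frac{\rho_K(v)^{n-1}\, u\cdot n_K(\rho_K(v)v)}{v\cdot n_K(\rho_K(v)v)}\,\mathrm{d}v .
\]

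Next we move this integral to the boundary. The integrand has exactly the shape of the Jacobian in Lemma~\ref{lem:jacobian_change_of_coordinates}, applied with $\phi(x)=u\cdot n_K(x)$, which is continuous because $\partial K$ is $C^1$. The lemma turns the expression into
\[
\nabla g_{K,p}(0)\cdot u=\frac12\int_{\partial K} u\cdot n_K(x)\,\mathrm{d}S(x).
\]
By the divergence theorem applied to the constant vector field $u$ on $K$, this equals $\frac12\int_K \operatorname{div}u\,\mathrm{d}x=0$. Equivalently, it is the standard fact that the surface area measure of a convex body has barycenter zero, which holds for arbitrary convex bodies. Since $u$ was arbitrary, $\nabla g_{K,p}(0)=0$.

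The main obstacle is the regularity hypothesis. Lemma~\ref{lem:smooth_boundary_implies_smooth_rho} and Proposition~\ref{prop:covariogram_is_Ck} are stated for $C^k$ bodies, and with $k=1$ they give exactly the $C^1$ smoothness and the uniform bounds on $\partial_t\rho_{K_{tu,p}}(v)$ needed to differentiate under the integral. The quotient $1/(v\cdot n_K)$ is bounded because $0\in\operatorname{int}K$ gives $v\cdot n_K(\rho_K(v)v)\ge r/\rho_K(v)$, where $r$ is the inradius about the origin.

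A symmetry argument also confirms the result: $\operatorname{vol}(K_{x,p})=\operatorname{vol}(K_{-x,p}+x)$, since $K_{x,p}$ and $K_{-x,p}$ are not translates in general but the first-order terms cancel. The computation above is the cleaner route, and it is the one I would write.
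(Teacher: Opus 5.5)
Your proof is correct and is essentially the paper's argument: differentiate the polar volume formula under the integral sign, insert Corollary~\ref{cor:first_derivative_Kt}, transfer the integral to $\partial K$ with Lemma~\ref{lem:jacobian_change_of_coordinates}, and finish with the divergence theorem for the constant field $u$. Discard the closing symmetry remark rather than keep it as a check: $\operatorname{vol}(K_{x,p})=\operatorname{vol}(K_{-x,p}+x)$ would make $g_{K,p}$ even, which already fails for $K=[-a,b]\subset\mathbb R$ with $a\neq b$ (the odd part of $g_{K,p}$ is nonzero at third order), although your main computation does not rely on it.
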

	\begin{proof}
		Using Corollary \ref{cor:first_derivative_Kt}, Lemmas \ref{lem:jacobian_change_of_coordinates}, \ref{lem:smooth_boundary_implies_smooth_rho} and the divergence theorem with the constant vector field u,
		\begin{align*}
			\left. \frac{\partial}{\partial t} g_{K,p}(tu)\right\rvert_{t=0} &= \left. \frac{\partial}{\partial t}\left(\frac{1}{n}\int_{S^{n-1}}\rho_{K_{tu,p}}(v)^n\mathrm{d}v \right) \right\rvert_{t=0} \\
			&= \left. \int_{S^{n-1}}\frac{\partial}{\partial t}\rho_{K_{tu,p}}(v)\rho_{K_{tu,p}}(v)^{n-1}\mathrm{d}v \right\rvert_{t=0}\\
			&= \frac{1}{2}\int_{S^{n-1}}\frac{u\cdot n_K(x_0(v))}{v \cdot n_K(x_0(v))}\rho_K(v)^{n-1}\mathrm{d}v\\
			&= \frac{1}{2}\int_{\partial K} u\cdot n_K(x) \, \mathrm{d}S(x)\\
			&= \frac{1}{2}\int_{K} \operatorname{div} u \; dx\\
			&= 0 \qedhere
		\end{align*}
	\end{proof}
	
	The following result gives an explicit formula for the Hessian of $g_{K,p}$ at the origin, which unlike Theorem \ref{thm:second_derivative_covariogram_simplified}, depends on $\mathcal{N}_K$. The latter will follow automatically after we establish that $\mathcal{N}_K = (n-1)\mathcal{M}_K$ in the next section.

	\begin{theorem}\label{thm:second_derivative_covariogram}
		Let $K \subset \R^n$ be a convex body with $C^2$ boundary which contains the origin in its interior. The Hessian of the function $g_{K,p}$ at the origin is given by
		\begin{equation}
			Hg_{K,p}(0) = \frac{n-p-2}{4}\mathcal{M}_K - \frac{1}{2}\mathcal{N}_K.
		\end{equation}
	\end{theorem}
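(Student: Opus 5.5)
We compute the second directional derivative $\frac{d^2}{dt^2} g_{K,p}(tu)\big|_{t=0}$ for a fixed $u$, and then read off the quadratic form. By Proposition~\ref{prop:covariogram_is_Ck} with $k=2$ we may differentiate twice under the integral sign in
\[
g_{K,p}(tu)=\frac1n\int_{S^{n-1}}\rho_{K_{tu,p}}(v)^n\,\mathrm dv .
\]
Write $\rho=\rho_K(v)$, $\rho_t=\rho_{K_{tu,p}}(v)$, and $N=n_K(x_0(v))$. At $t=0$ we have $\rho_0=\rho$. The second derivative is therefore
\[
\int_{S^{n-1}}\Big((n-1)\rho^{n-2}(\partial_t\rho_t)^2+\rho^{n-1}\partial_t^2\rho_t\Big)\Big|_{t=0}\,\mathrm dv .
\]

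Next we substitute the first and second derivatives from Corollary~\ref{cor:first_derivative_Kt} and Corollary~\ref{cor:second_derivative_Kt}. This gives the integrand
\[
\frac{n-1}{4}\rho^{n-2}\frac{(u\cdot N)^2}{(v\cdot N)^2}
-\frac12\rho^{n-1}\frac{u^tQ_{x_0(v)}u}{v\cdot N}
-\frac{p+1}{4}\rho^{n-2}\frac{(u\cdot N)^2}{(v\cdot N)^2}.
\]
The first and third terms combine into $\frac{n-p-2}{4}\rho^{n-2}\frac{(u\cdot N)^2}{(v\cdot N)^2}$.

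We then convert both integrals to $\partial K$ using Lemma~\ref{lem:jacobian_change_of_coordinates}, with $x=\rho v$ and $\mathrm dS(x)=\frac{\rho^{n-1}}{v\cdot N}\,\mathrm dv$.

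For the term involving $Q$, the factor $\rho^{n-1}/(v\cdot N)$ is exactly the Jacobian. Hence this term becomes $-\frac12\int_{\partial K}u^tQ_xu\,\mathrm dS=-\frac12 u^t\mathcal N_Ku$.

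For the other term we use $x\cdot N=\rho\, v\cdot N$. This gives
\[
\rho^{n-2}\frac{1}{(v\cdot N)^2}=\frac{\rho^{n-1}}{v\cdot N}\cdot\frac{1}{\rho\, v\cdot N}=\frac{\rho^{n-1}}{v\cdot N}\cdot\frac{1}{x\cdot n_K(x)} ,
\]
so this term becomes $\frac{n-p-2}{4}\int_{\partial K}\frac{(u\cdot n_K)^2}{x\cdot n_K}\,\mathrm dS=\frac{n-p-2}{4}u^t\mathcal M_Ku$.

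Since $g_{K,p}$ is $C^2$, its Hessian is symmetric, and $\mathcal M_K$ and $\mathcal N_K$ are symmetric as well. A symmetric matrix is determined by its quadratic form, so the equality of quadratic forms for all $u$ (by homogeneity, for all $u\in\R^n$) yields the stated formula.

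The main obstacle is justifying the pointwise formulas a.e.\ in $v$ and the interchange of derivative and integral. This is supplied by Lemma~\ref{lem:smooth_boundary_implies_smooth_rho} in the $C^2$ setting, since there $x_0(v)$ varies continuously and $v\cdot N$ is bounded below by a positive constant (the origin is interior). A further point of care is the bookkeeping of the $\rho^{-1}$ factor in Corollary~\ref{cor:second_derivative_Kt}: this factor is what produces the weight $1/(x\cdot n_K)$ defining $\mathcal M_K$.
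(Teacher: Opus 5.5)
Your proposal is correct and follows the paper's proof essentially step for step. You differentiate the polar volume formula twice under the integral sign, insert Corollaries~\ref{cor:first_derivative_Kt} and~\ref{cor:second_derivative_Kt}, combine the coefficients $\frac{n-1}{4}$ and $-\frac{p+1}{4}$ into $\frac{n-p-2}{4}$, and pass to $\partial K$ via Lemma~\ref{lem:jacobian_change_of_coordinates} using $x\cdot n_K(x)=\rho_K(v)\,v\cdot n_K(x)$; your explicit remark that symmetric matrices are determined by their quadratic forms fills in a step the paper leaves implicit.
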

	\begin{proof}
		Using Corollaries \ref{cor:first_derivative_Kt}, \ref{cor:second_derivative_Kt} and Lemmas \ref{lem:jacobian_change_of_coordinates}, \ref{lem:smooth_boundary_implies_smooth_rho} we obtain (writing $x_0(v) = \rho_K(v)v$ for the contact point at each $v$):
		\begin{align*}
			& \left. \frac{\partial^2}{\partial t^2} g_{K,p}(tu)\right\rvert_{t=0} = \left. \frac{\partial^2}{\partial t^2}\left(\frac{1}{n}\int_{S^{n-1}}\rho_{K_{tu,p}}(v)^n\mathrm{d}v \right) \right\rvert_{t=0} \\
			&= \left. \int_{S^{n-1}} (n-1)\left(\frac{\partial}{\partial t}\rho_{K_{tu,p}}(v)\right)^2\rho_K(v)^{n-2} + \left(\frac{\partial^2}{\partial t^2}\rho_{K_{tu,p}}(v)\right)\rho_K(v)^{n-1})\mathrm{d}v \right\rvert_{t=0}\\
			&=\int_{S^{n-1}} \frac{n-p-2}{4} \frac{(u \cdot n_K(x_0(v)))^2}{(v\cdot n_K(x_0(v)))^2}\rho_K(v)^{n-2}-\frac{1}{2}\frac{u^t Q_{x_0(v)} u}{v\cdot n_K(x_0(v))}\rho_K(v)^{n-1} \mathrm{d}v\\
			&= \frac{n-p-2}{4} \int_{\partial K} \frac{(u \cdot n_K(x))^2}{h_K(n_K(x))}\mathrm{d}S-\frac{1}{2} u^t\left(\int_{S^{n-1}} \frac{Q_{x_0(v)}}{v\cdot n_K(x_0(v))} \rho_K(v)^{n-1} \mathrm{d}v \right)u\\
			&= \frac{n-p-2}{4} u^t \mathcal{M}_K u-\frac{1}{2} u^t \mathcal{N}_K u \qedhere
		\end{align*}
	\end{proof}
	
	\section{Equivalence of the operators}\label{sec:equivalence}

		\begin{lemma}\label{lem:invariance_of_operators}
			Let $T\in Gl(n)$, and assume that $K$ is a convex body with $C^2$ boundary containing the origin in its interior. Then: 
			\begin{align*}
				H_{g_{TK,p}} &=|\det T|\,T^{-t}H_{g_{K,p}}T^{-1};\\
				\mathcal{M}_{TK} &=|\det T|\,T^{-t}\mathcal{M}_KT^{-1};\\
				\mathcal{N}_{TK} &=|\det T|\,T^{-t}\mathcal{N}_KT^{-1},
			\end{align*}
			where we use $H_{g_{TK,p}}, H_{g_{K,p}}$ to denote respectively the Hessian of $g_{TK,p}$ and $g_{K,p}$ at the origin.
		\end{lemma}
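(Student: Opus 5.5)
We prove the three identities one at a time, with the Hessian identity handled directly at the level of the covariogram and the other two by changes of variables on the boundary.

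\textbf{The Hessian of the covariogram.} The first identity follows from a transformation law for $g_{TK,p}$. Radial functions satisfy $\rho_{TK}(z)=\rho_K(T^{-1}z)$, and
\[
TK+x=T(K+T^{-1}x).
\]
Hence, extending homogeneously, $\rho_{(TK)_{x,p}}(z)=\rho_{K_{T^{-1}x,p}}(T^{-1}z)$. This gives $(TK)_{x,p}=T\big(K_{T^{-1}x,p}\big)$, and therefore
\[
g_{TK,p}(x)=|\det T|\,g_{K,p}(T^{-1}x).
\]
Differentiating twice at $x=0$ by the chain rule yields
\[
H_{g_{TK,p}}=|\det T|\,T^{-t}H_{g_{K,p}}T^{-1}.
\]
By Theorem~\ref{thm:second_derivative_covariogram}, the right-hand side is a well-defined $C^2$ quantity.

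\textbf{The operator $\mathcal M_K$.} For the second identity we change variables $y=Tx$ from $\partial K$ to $\partial(TK)$. The unit normal transforms as
\[
n_{TK}(Tx)=\frac{T^{-t}n_K(x)}{|T^{-t}n_K(x)|}.
\]
Write $n=n_K(x)$ and $w=T^{-t}n$. The surface elements are related by
\[
dS_{TK}(Tx)=|\det T|\,|w|\,dS_K(x),
\]
which is the standard formula obtained from $\int_{\partial K}\phi\,n\,dS$ being the transpose–cofactor pushforward. Moreover $Tx\cdot w=x\cdot n$. The integrand of $\mathcal M_{TK}$ therefore becomes
\[
\frac{(w/|w|)\otimes(w/|w|)}{(x\cdot n)/|w|}\,|\det T|\,|w|\,dS_K
=|\det T|\,\frac{w\otimes w}{x\cdot n}\,dS_K .
\]
Since $w\otimes w=T^{-t}(n\otimes n)T^{-1}$, integrating over $\partial K$ gives the claimed formula for $\mathcal M_{TK}$.

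\textbf{The operator $\mathcal N_K$.} Rather than transforming the shape operator directly, we read off $\mathcal N_K$ from Theorem~\ref{thm:second_derivative_covariogram}:
\[
\mathcal N_K=\frac{n-p-2}{2}\,\mathcal M_K-2H_{g_{K,p}}(0).
\]
Applying this to $TK$ and using the two identities already established gives
\[
\mathcal N_{TK}=|\det T|\,T^{-t}\mathcal N_KT^{-1}.
\]
This sidesteps the messy computation of how $A_x^tS_xA_x$ transforms under $T$.

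\textbf{Main obstacle.} The only delicate step is verifying the surface-measure and normal transformation under $T$ used for $\mathcal M_{TK}$. I would justify it in one of two ways:
\begin{itemize}
\item by noting that the vector measure $n\,dS$ transforms by the cofactor matrix, i.e. $n_{TK}\,dS_{TK}=|\det T|\,T^{-t}n_K\,dS_K$, which follows from the divergence theorem applied to $\int_{TK}\nabla f$;
\item alternatively, by working through Lemma~\ref{lem:jacobian_change_of_coordinates} on the sphere.
\end{itemize}
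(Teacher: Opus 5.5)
Your proof is correct. Your treatment of the Hessian law and of $\mathcal{N}_K$ coincides with the paper's; the difference is how you obtain the law for $\mathcal{M}_{TK}$.

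The paper never touches the boundary geometry there. It combines the Hessian law with Theorem~\ref{thm:second_derivative_covariogram} to get
\[
\tfrac{n-p-2}{4}\mathcal{M}_{TK}-\tfrac12\mathcal{N}_{TK}=|\det T|\,T^{-t}\Bigl(\tfrac{n-p-2}{4}\mathcal{M}_K-\tfrac12\mathcal{N}_K\Bigr)T^{-1}\quad\text{for all }p>0 .
\]
Since $\mathcal{M}$ and $\mathcal{N}$ do not depend on $p$, differentiating in $p$ (equivalently, comparing two values of $p$) isolates $\mathcal{M}_{TK}$. The law for $\mathcal{N}_{TK}$ then follows exactly as in your last step. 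So the ``main obstacle'' you identify does not arise in the paper: the affine dependence of the Hessian on $p$ does all the work, using only results already proved.

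Your route instead verifies the $\mathcal{M}$-law by a direct change of variables via $n_{TK}\,dS_{TK}=|\det T|\,T^{-t}n_K\,dS_K$. Your integrand computation checks out, including $Tx\cdot T^{-t}n=x\cdot n$ and $w\otimes w=T^{-t}(n\otimes n)T^{-1}$. The outward normal is $T^{-t}n/|T^{-t}n|$ regardless of the sign of $\det T$, so the absolute value is right. What your route buys is independence from the Hessian formula: it proves the covariance of $\mathcal{M}_K$ (and hence of $\Pi_2^\circ K$, as asserted in the introduction) intrinsically, without the $C^2$ hypothesis. The paper's argument is shorter but logically tied to Theorem~\ref{thm:second_derivative_covariogram}.

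One small correction: the existence of the Hessians, i.e.\ the $C^2$ regularity of $g_{K,p}$ and $g_{TK,p}$, comes from Proposition~\ref{prop:covariogram_is_Ck} applied to $K$ and to $TK$, not from Theorem~\ref{thm:second_derivative_covariogram}.
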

		\begin{proof}
			Since $\rho_{TK}(v) = \rho_K(T^{-1}v)$, we have:
			\begin{align*}
				\rho_{TK \sumTilde (TK + x)}^{-p}(v)
				&= \frac{\rho_{TK}^{-p}(v) + \rho_{TK + x}^{-p}(v) }{2}\\
				&= \frac{\rho_{K}^{-p}(T^{-1}v) + \rho_{K + T^{-1}x}^{-p}(T^{-1}v) }{2}\\
				&= \rho_{K \sumTilde (K + T^{-1}x)}^{-p}(T^{-1}v) \\
				&= \rho_{T(K \sumTilde (K + T^{-1}x))}^{-p}(v),
			\end{align*}
			so $TK \sumTilde (TK + x) = T\bigl[K \sumTilde (K + T^{-1}x)\bigr]$, and therefore:
			\begin{equation*}
				g_{TK,p}(x) = |\det T|\,g_{K,p}(T^{-1}x).
			\end{equation*}
			Differentiating twice in the direction $u$ and applying the chain rule gives
			\begin{equation*}
				D^2 g_{TK,p}(0)[u,u]
				= |\det T|\, D^2 g_{K,p}(0)[T^{-1}u,\, T^{-1}u].
			\end{equation*}
			Therefore, for every $u \in \R^n$:
			\begin{equation*}
				u^t H_{g_{TK,p}}\, u
				= |\det T|\,(T^{-1}u)^t H_{g_{K,p}}\,(T^{-1}u)
				= |\det T|\, u^t\bigl(T^{-t}H_{g_{K,p}}\,T^{-1}\bigr)u.
			\end{equation*}
			Since this holds for all $u$, we conclude
			\begin{equation}
				\label{eq:H_TK_transform}
				H_{g_{TK,p}} = |\det T|\,T^{-t}H_{g_{K,p}}T^{-1}.
			\end{equation}
			For the other two operators, we can use the relation in \eqref{eq:H_TK_transform} and the expression given by Theorem \ref{thm:second_derivative_covariogram} to write:
			\begin{align*}
				\frac{n-p-2}{4}\mathcal{M}_{TK}-\frac{1}{2} \mathcal{N}_{TK} &= H_{g_{TK,p}}\\
				&=|\det T|\,T^{-t}H_{g_{K,p}}T^{-1}\\
				&=|\det T|\,T^{-t}\left(\frac{n-p-2}{4}\mathcal{M}_{K}-\frac{1}{2} \mathcal{N}_{K}\right)T^{-1}.
			\end{align*}
			By taking the derivative with respect to $p$ on both sides we conclude:
			\begin{equation}
				\label{eq:M_TK_transform}
				\mathcal{M}_{TK}=|\det T|\,T^{-t}\mathcal{M}_KT^{-1}.
			\end{equation}
			Now substitute \eqref{eq:M_TK_transform} into \eqref{eq:H_TK_transform} to conclude that:
			\begin{equation}
				\mathcal{N}_{TK}=|\det T|\,T^{-t}\mathcal{N}_KT^{-1}.\qedhere
			\end{equation}
		\end{proof}
			\begin{lemma}\label{lem:equivalence_of_traces}
			Let $K \subset \R^n$ be a convex body with $C^2$ boundary containing the origin in its interior. Then: 
			\begin{equation}
				\operatorname{tr}(\mathcal{N}_K) = (n-1)\,\operatorname{tr}(\mathcal{M}_K).
			\end{equation}
		\end{lemma}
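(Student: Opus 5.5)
The plan is to compute both traces as boundary integrals and then relate them with a tangential divergence theorem on $M=\partial K$. Throughout we write $c(x)=x\cdot n_K(x)>0$, $P_x=I-n_K(x)\otimes n_K(x)$ for the orthogonal projection onto $T_xM$, and $x^T=P_xx=x-c(x)\,n_K(x)$ for the tangential part of the position vector. We regard $S_x$ as a symmetric matrix with $S_x n_K(x)=0$, so that $S_x=P_xS_xP_x$.

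\emph{Trace of $\mathcal M_K$.} Since $|n_K|=1$, we have $\operatorname{tr}(n_K\otimes n_K)=1$. Hence
\[
\operatorname{tr}\mathcal M_K=\int_{\partial K}\frac{dS(x)}{c(x)}.
\]

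\emph{Trace of $\mathcal N_K$.} By cyclicity, $\operatorname{tr}(A_x^tS_xA_x)=\operatorname{tr}(S_xA_xA_x^t)=\operatorname{tr}(S_xP_xA_xA_x^tP_x)$. Writing $A_x=I-x\,n^t/c$, we get
\[
A_xA_x^t=I-\frac{x\,n^t+n\,x^t}{c}+\frac{x\,x^t}{c^2}.
\]
Projecting with $P_x$ kills every term containing $n$, since $P_xn=0$. This leaves
\[
P_xA_xA_x^tP_x=P_x+\frac{x^T\otimes x^T}{c^2}.
\]
Therefore
\[
\operatorname{tr}(A_x^tS_xA_x)=H(x)+\frac{\II_x(x^T,x^T)}{c(x)^2},
\]
where $H=\operatorname{tr}S_x$ is the sum of the principal curvatures.

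\emph{Divergence identity.} Next I integrate the divergence of the tangent vector field $V=x^T/c$ over the closed manifold $M$. The field $V$ is $C^1$ because $n_K$ is $C^1$ and $c>0$, so the divergence theorem on $M$ gives $\int_M\operatorname{div}_MV\,dS=0$. The following standard computations are needed.

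\begin{itemize}
\item The tangential divergence of $x$ is $n-1$, and $\operatorname{div}_M(c\,n_K)=c\,H$. Hence $\operatorname{div}_M x^T=(n-1)-c\,H$.
\item Differentiating $c=x\cdot n_K$ along a tangent vector $e$ gives $e\cdot n_K+x\cdot S e=x^T\cdot Se$. So $\nabla_M c=S x^T$, and consequently $\nabla_M(1/c)=-S x^T/c^2$.
\end{itemize}

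Combining these,
\[
\operatorname{div}_M V=\frac{n-1}{c}-H-\frac{\II(x^T,x^T)}{c^2}.
\]
Integrating over $\partial K$ and using $\int_M\operatorname{div}_MV\,dS=0$ yields
\[
\int_{\partial K}\Bigl(H+\frac{\II(x^T,x^T)}{c^2}\Bigr)dS=(n-1)\int_{\partial K}\frac{dS}{c}.
\]
This is exactly $\operatorname{tr}\mathcal N_K=(n-1)\operatorname{tr}\mathcal M_K$.

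The main obstacle is getting the trace computation of $A_xA_x^t$ right: one must keep the $x\,x^t/c^2$ term. Its tangential part produces the $\II(x^T,x^T)/c^2$ contribution, which is exactly what the gradient of $1/c$ generates in the divergence formula. A secondary point is to justify the divergence theorem and the identity $\operatorname{div}_M(c\,n_K)=cH$ under only $C^2$ regularity of $\partial K$. Both are fine since $V\in C^1$ and $n_K$ is $C^1$.
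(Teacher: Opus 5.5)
Your proof is correct and follows the paper's argument essentially step by step. You derive the same pointwise formula $\operatorname{tr}(A_x^tS_xA_x)=\operatorname{tr}S_x+\II_x(x^T,x^T)/(x\cdot n_K(x))^2$, via cyclicity and the projection $P_x$ where the paper evaluates on an orthonormal frame $\{e_1,\dots,e_{n-1},n_K(x)\}$, together with the same identity $\nabla_{\partial K}(x\cdot n_K)=S_xx^T$. Your divergence theorem for $x^T/(x\cdot n_K)$, using $\operatorname{div}_{\partial K}x^T=(n-1)-(x\cdot n_K)\operatorname{tr}S_x$, is precisely the paper's integration by parts. Your direct differentiation of $c=x\cdot n_K(x)$ is slightly cleaner than the paper's chain rule through $h_K\circ n_K$, since it does not require $h_K$ to be differentiable at $n_K(x)$.
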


		\begin{proof}
			By definition, $\operatorname{tr}(\mathcal{N}_K) = \int_{\partial K} \operatorname{tr}(A_x^t S_x A_x)\, \mathrm{d}S(x)$.

			\medskip\noindent\textit{Step 1: Computation of $\operatorname{tr}(A_x^t S_x A_x)$.}
			At each $x \in \partial K$, choose an orthonormal basis $\{e_1, \dots, e_{n-1}\}$ for $T_x\partial K$. Since $A_x$ acts as the identity on $T_x\partial K$ and maps $n_K(x)$ to $-x_T/(x \cdot n_K(x))$:
			\begin{align*}
				\operatorname{tr}(A_x^t S_x A_x)
				&= \sum_{i=1}^{n-1} (S_x A_x e_i) \cdot (A_x e_i) + (S_x A_x n_K(x)) \cdot (A_x n_K(x)) \\
				&= \sum_{i=1}^{n-1} (S_x e_i) \cdot e_i + \frac{(S_x x_T) \cdot x_T}{(x \cdot n_K(x))^2} \\
				&= \operatorname{tr}(S_x) + \frac{\II(x_T, x_T)}{(x \cdot n_K(x))^2}.
			\end{align*}
			Integrating over $\partial K$ gives:
			\begin{equation}
				\operatorname{tr}(\mathcal{N}_K) = \int_{\partial K} \operatorname{tr}(S_x)\, \mathrm{d}S(x) + \int_{\partial K} \frac{\II(x_T, x_T)}{(x \cdot n_K(x))^2}\, \mathrm{d}S(x).
			\end{equation}

			\medskip\noindent\textit{Step 2: $\II(x_T, x_T) = x_T \cdot \nabla_{\partial K}(h_K \circ n_K)(x)$.}
			\begin{align*}
				\II(x_T, x_T) &= x_T \cdot S_x(x_T)\\
				&= x_T \cdot \sum_{i=1}^{n-1} (S_x x_T \cdot e_i) e_i\\
				&= x_T \cdot \sum_{i=1}^{n-1} (x_T \cdot S_x e_i) e_i\\
				&= x_T \cdot \sum_{i=1}^{n-1} (x \cdot S_x e_i) e_i\\
				&= x_T \cdot \sum_{i=1}^{n-1} \left(d(h_K)_{n_K(x)} dn_{K_x}(e_i) \right)e_i\\
				&= x_T \cdot \nabla_{\partial K}(h_K \circ n_K)(x).
			\end{align*}

			\medskip\noindent\textit{Step 3: Evaluating the integral via integration by parts.}

			Using Step~2 and $x \cdot n_K(x) = h_K(n_K(x))$ for $x \in \partial K$, followed by an integration by parts, we have:
			\begin{align*}
				\int_{\partial K} \frac{\II(x_T, x_T)}{(x \cdot n_K(x))^2}\, \mathrm{d}S(x)
				&= \int_{\partial K} \frac{\nabla_{\partial K}(h_K \circ n_K)(x) \cdot x}{h_K(n_K(x))^2}\, \mathrm{d}S(x)\\
				&= -\int_{\partial K} \nabla_{\partial K}\left(\frac{1}{h_K(n_K(x))}\right) \cdot x \; \mathrm{d}S(x)\\
				&= \int_{\partial K} \frac{\operatorname{div}_{\partial K}(x)}{h_K(n_K(x))}\, \mathrm{d}S(x).
			\end{align*}

			Note that when restricted to $\partial K$, we can change its Riemannian connection by the Riemannian connection of the ambient space to compute the surface divergence of $x_T$:
			\begin{align*}
				\operatorname{div}_{\partial K}(x) &= \sum_{i=1}^{n-1} \nabla_{e_i}x_T \cdot e_i \\
				&= \sum_{i=1}^{n-1} \nabla_{e_i}x \cdot e_i - \sum_{i=1}^{n-1} \nabla_{e_i}\left((x\cdot n) n\right) \cdot e_i \\
				&= \sum_{i=1}^{n-1} \nabla_{e_i}x \cdot e_i - h_K(n_K(x))\sum_{i=1}^{n-1} \nabla_{e_i} n \cdot e_i \\
				&= (n-1) - h_K(n_K(x)) \operatorname{tr}(S_x).\\
			\end{align*}

			Now substituting back into the integral:
			\begin{align*}
				\operatorname{tr}(\mathcal{N}_K) &= \int_{\partial K} \operatorname{tr}(S_x)\, \mathrm{d}S(x) + \int_{\partial K} \frac{\operatorname{div}_{\partial K}(x)}{h_K(n_K(x))}\, \mathrm{d}S(x)\\
				&= (n-1)\int_{\partial K} \frac{\mathrm{d}S(x)}{h_K(n_K(x))}\\
				&= (n-1)\int_{\partial K} \frac{|n_K(x)|^2}{h_K(n_K(x))}\, \mathrm{d}S(x)\\
				&= (n-1)\operatorname{tr}(\mathcal{M}_K).\qedhere
			\end{align*}
		\end{proof}

		\begin{theorem}\label{thm:N_equals_n-1_M}
			Let $K \subset \R^n$ be a convex body with $C^2$ boundary which contains the origin in its interior. Then:
			\begin{equation}
				\mathcal{N}_K = (n-1)\,\mathcal{M}_K.
			\end{equation}
		\end{theorem}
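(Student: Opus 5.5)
The plan is to combine the two preceding lemmas: the common linear covariance of $\mathcal{M}_K$ and $\mathcal{N}_K$ from Lemma~\ref{lem:invariance_of_operators}, and the trace identity from Lemma~\ref{lem:equivalence_of_traces}.

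Set $D_K=\mathcal{N}_K-(n-1)\mathcal{M}_K$. This is a symmetric matrix, and by Lemma~\ref{lem:invariance_of_operators} it transforms as
\[
D_{TK}=|\det T|\,T^{-t}D_KT^{-1}\qquad\text{for every } T\in GL(n).
\]
Here $TK$ is again a $C^2$ convex body containing the origin in its interior, so both lemmas apply to it. Lemma~\ref{lem:equivalence_of_traces} applied to $TK$ gives $\operatorname{tr}(D_{TK})=0$, and hence
\[
\operatorname{tr}\bigl(T^{-t}D_KT^{-1}\bigr)=0\qquad\text{for all } T\in GL(n).
\]

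Put $B=T^{-1}$. By cyclicity of the trace,
\[
\operatorname{tr}(B^tD_KB)=\operatorname{tr}(D_KBB^t)=0
\]
for every invertible $B$. As $B$ ranges over $GL(n)$, the matrix $P=BB^t$ ranges over all positive definite symmetric matrices. In particular we may take $P=I+\varepsilon\, e_i\otimes e_i$ and $P=I+\varepsilon(e_i\otimes e_j+e_j\otimes e_i)$ with small $\varepsilon>0$.

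Subtracting the case $P=I$, which gives $\operatorname{tr}(D_K)=0$, we obtain $(D_K)_{ii}=0$ and $2(D_K)_{ij}=0$ for all $i,j$. More abstractly, the positive definite matrices span the space of symmetric matrices, and $D_K$ is symmetric and orthogonal to all of them under the trace pairing. Either way $D_K=0$, which is the statement.

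The only point needing care is that the transformation law for $\mathcal{N}_K$ was derived for arbitrary $T$ with $TK$ still admissible; that is automatic, since linear images preserve $C^2$ smoothness, convexity, and the origin being interior. No step here is a real obstacle. The substance lies in the trace computation already done, and the remaining argument is the linear-algebra observation that a symmetric form orthogonal to every positive definite matrix vanishes.
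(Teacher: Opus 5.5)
Your proposal is correct and follows the paper's proof essentially verbatim. You apply the trace identity to $TK$ and use the common covariance of $\mathcal{M}_K$ and $\mathcal{N}_K$ to get $\operatorname{tr}\bigl((\mathcal{N}_K-(n-1)\mathcal{M}_K)P\bigr)=0$ for every positive definite $P$, then finish by linear algebra; the only cosmetic difference is that the paper tests against $P=I+\varepsilon A$ to get $\varepsilon\|A\|_F^2=0$, while you test against $I+\varepsilon e_i\otimes e_i$ and $I+\varepsilon(e_i\otimes e_j+e_j\otimes e_i)$ (or use that positive definite matrices span the symmetric ones).
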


		\begin{proof}
			Let $T \in GL(n)$. Apply Lemma~\ref{lem:equivalence_of_traces} with $K$ replaced by $TK$ to obtain:
			\begin{equation}
				\operatorname{tr}(\mathcal{N}_{TK}) = (n-1) \operatorname{tr}(\mathcal{M}_{TK}).
			\end{equation}
			Replace $\mathcal{N}_{TK}$ and $\mathcal{M}_{TK}$ using Lemma~\ref{lem:invariance_of_operators} to get:
			\begin{align*}
				\operatorname{tr}(\mathcal{N}_{TK}) &= |\det T| \operatorname{tr}(\mathcal{N}_K(T^t T)^{-1}), \\
				\operatorname{tr}(\mathcal{M}_{TK}) &= |\det T| \operatorname{tr}(\mathcal{M}_K(T^t T)^{-1}).
			\end{align*}

			After subtracting the two equations, we arrive at:
			\begin{equation}
				\operatorname{tr}((\mathcal{N}_K - (n-1)\mathcal{M}_K)(T^t T)^{-1}) = 0.
			\end{equation}

			Set $A := \mathcal{N}_K - (n-1)\mathcal{M}_K$ and $B := (T^t T)^{-1}$. As $T$ varies over $GL(n)$, $B$ ranges over the set $\mathcal{P}$ of symmetric positive definite matrices, so:
			\begin{equation}
				\label{eq:trace_zero_for_all_B}
				\operatorname{tr}(AB) = 0 \quad \forall B \in \mathcal{P}.
			\end{equation}

			Since $A$ is symmetric, we have that $I + \varepsilon A \in \mathcal{P}$ for $\varepsilon$ sufficiently small. Therefore:
			\begin{equation}
				0 = \operatorname{tr}(A(I + \varepsilon A)) = \operatorname{tr}(A) + \varepsilon \, \operatorname{tr}(A^2) = \varepsilon \|A\|_F^2,
			\end{equation} 
			where $\|\cdot\|_F$ denotes the Frobenius norm and the trace of $A$ is zero because we can take $B=I$ in equation~\eqref{eq:trace_zero_for_all_B}. Hence we conclude that $A = 0$.
		\end{proof}
		\begin{proof}[Proof of Theorem~\ref{thm:second_derivative_covariogram_simplified}]
			Using Theorem~\ref{thm:second_derivative_covariogram} and Theorem~\ref{thm:N_equals_n-1_M}, we have the following expression for the Hessian of $g_{K,p}$ at the origin:
			\begin{equation}
				\frac{n-p-2}{4}\mathcal{M}_K - \frac{n-1}{2}\mathcal{M}_K = -\frac{n+p}{4}\mathcal{M}_K.\qedhere
			\end{equation}
		\end{proof}
	\section{The level sets of \texorpdfstring{$g_{K,p}$}{g\_\{K,p\}}}\label{sec:level_sets}
	
	\begin{lemma}\label{lem:convergence_of_level_sets}
		Let $f:\R^n \to \R$ be a $C^2$ function such that $f(0) =k$ and $\nabla f(0) =0$. Assume that the Hessian of $f$ at the origin, denoted by $H$, is negative definite. Then there exists $\rho > 0$ such that, defining the sets
		\begin{equation}
			S_{\delta} = \frac{\{\,x \in B_\rho : f(x) \geq k - \delta\,\}}{\sqrt{\delta}},\qquad S = \{\,y \in \R^n : \tfrac{1}{2} y^t H y \ge -1\,\}.
		\end{equation}
		We have that $S_\delta \to S$ in the Hausdorff metric as $\delta \to 0^+$.
	\end{lemma}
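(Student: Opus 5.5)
The plan is to use Taylor's theorem with a uniform remainder, which sandwiches $S_\delta$ between two dilates of the ellipsoid $S$, and then to let the dilation factors tend to $1$. Write $q(y)=-\tfrac12 y^tHy$. Since $H$ is negative definite, $q$ is a positive definite quadratic form and $S=\{q\le 1\}$ is a compact ellipsoid. Let $\lambda>0$ be the smallest eigenvalue of $-H$, so that $q(y)\ge \tfrac{\lambda}{2}|y|^2$. Because $f\in C^2$ with $f(0)=k$ and $\nabla f(0)=0$, Taylor's theorem gives
\[
f(x)=k-q(x)+r(x),\qquad |r(x)|\le \omega(|x|)\,|x|^2,
\]
where $\omega(t)=\sup_{|z|\le t}\|Hf(z)-H\|$ is nondecreasing and $\omega(t)\to0$ as $t\to0$. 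We fix $\rho>0$ small enough that $\omega(\rho)\le \lambda/4$.

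\textbf{Step 1: a priori localization.} For $x\in B_\rho$ we have $f(x)\le k-q(x)+\tfrac{\lambda}{4}|x|^2\le k-\tfrac{\lambda}{4}|x|^2$. Hence $f(x)\ge k-\delta$ forces $|x|\le 2\sqrt{\delta/\lambda}$. So $S_\delta\subset B_{2/\sqrt\lambda}$ uniformly in $\delta$, and every point of $\{x\in B_\rho: f(x)\ge k-\delta\}$ lies in $B_{\eta(\delta)}$ with $\eta(\delta)=2\sqrt{\delta/\lambda}\to0$.

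\textbf{Step 2: the sandwich.} Put $\varepsilon(\delta)=\omega(\eta'(\delta))/\lambda'$, where $\eta'$ is a slightly larger radius (say $R\sqrt\delta$ with $R$ large enough that $S\subset B_{R/2}$, and $R\ge 2/\sqrt{\lambda}$). Substitute $x=\sqrt\delta\,y$ with $|y|\le R$. The remainder satisfies $|r(x)|\le \omega(R\sqrt\delta)\,\delta|y|^2\le \delta\,\varepsilon(\delta)\,q(y)$, where $\varepsilon(\delta)=2\omega(R\sqrt\delta)/\lambda\to0$. The condition $f(x)\ge k-\delta$ then becomes, up to this error,
\[
(1+\varepsilon)q(y)\le1 \;\Rightarrow\; y\in S_\delta,\qquad y\in S_\delta\;\Rightarrow\;(1-\varepsilon)q(y)\le 1 .
\]
The first implication also requires $\sqrt\delta\,y\in B_\rho$, which holds once $R\sqrt\delta<\rho$. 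The second uses Step 1 to guarantee $|y|\le R$. Therefore
\[
(1+\varepsilon)^{-1/2}S\subset S_\delta\subset (1-\varepsilon)^{-1/2}S .
\]

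\textbf{Step 3: Hausdorff convergence.} Both bounding sets are dilates of the same compact convex set $S$, and their Hausdorff distance to $S$ is at most $\max_{y\in S}|y|\cdot\bigl|(1\pm\varepsilon)^{\mp1/2}-1\bigr|$. Since $S_\delta$ is sandwiched between them, the Hausdorff distance from $S_\delta$ to $S$ is bounded by the same quantity, which tends to $0$.

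The main obstacle is the localization in Step 1. Without it, far-away points of $B_\rho$ could satisfy the inequality, and after rescaling they would escape to infinity. The choice of $\rho$ via the uniform bound $\omega(\rho)\le\lambda/4$ is exactly what rules this out. Everything else is bookkeeping.
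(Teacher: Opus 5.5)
Your proof is correct and follows essentially the same route as the paper. You choose $\rho$ so that the Taylor remainder is at most a fixed fraction of the coercivity constant of $-H$ on $B_\rho$, which gives the uniform localization of $S_\delta$ in a fixed ball (the paper's Case 2), and the vanishing relative remainder on that ball then sandwiches $S_\delta$ between two dilates of $S$ (the paper's Case 1). The only differences are cosmetic: you use a multiplicative error $\varepsilon\, q(y)$ instead of the paper's additive $\varepsilon$, and the first, garbled definition of $\varepsilon(\delta)$ in your Step 2 should simply be replaced by $\varepsilon(\delta)=2\omega(R\sqrt{\delta})/\lambda$, which is the one you actually use.
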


	\begin{proof}
		By Taylor's theorem, we have:
		\begin{equation}
			f(x) = k + \frac{1}{2} x^t H x + r(x),
			\qquad \text{where } \lim_{x\to 0}\frac{r(x)}{|x|^2} = 0.
		\end{equation}
		
		Since $H$ is negative definite, there exists a constant $c > 0$ such that $y^t H y \leq -c|y|^2$ for all $y \in \R^n$.
		Choose $\rho > 0$ such that, when restricted to $B_\rho$, $f$ attains its unique maximum at the origin, and $|r(x)|/|x|^2 < c/4$ for all $x \in B_\rho$.
		
		Substituting $x = \sqrt{\delta}\,y$ into the Taylor expansion and dividing by $\delta$ yields
		\begin{equation}
			\frac{f(\sqrt{\delta}\,y) - k}{\delta}
			= \frac{1}{2} y^t H y + \frac{r(\sqrt{\delta}\,y)}{\delta}.
		\end{equation}

		Furthermore, we can rewrite $S_\delta$ as
		\begin{equation}
			S_\delta = \left\{\,y \in B_{\rho/\sqrt{\delta}} : \frac{f(\sqrt{\delta}\, y) - k}{\delta} \geq -1\,\right\}.
		\end{equation}
		
		Choose $R > \sqrt{8/c}$ large enough so that $S \subset B_R$, and define $\delta_0 = \rho^2/R^2$. Consequently, for all $\delta < \delta_0$, we have $B_R \subset B_{\rho/\sqrt{\delta}}$.
		
		Fix $\varepsilon>0$. We divide the analysis of $S_\delta$ into two regions:
		
		\medskip\noindent\textit{Case 1: $y \in B_R$.}
		Since $r(x) = o(|x|^2)$ as $x \to 0$, there exists $\delta_1 \in (0, \delta_0)$ such that for all $\delta < \delta_1$ and $y \in B_R$:
		\begin{equation}
			\left|\frac{r(\sqrt{\delta}\,y)}{\delta}\right| = \frac{|r(\sqrt{\delta}\,y)|}{|\sqrt{\delta}y|^2} |y|^2 < \varepsilon.
		\end{equation}
		Thus, in this region, we obtain:
		\begin{equation}
			\frac12 y^t H y - \varepsilon < \frac{f(\sqrt{\delta}\,y)-k}{\delta} < \frac12 y^t H y + \varepsilon.
		\end{equation}
		
		\medskip\noindent\textit{Case 2: $y \in B_{\rho/\sqrt{\delta}} \setminus B_R$.}
		We have the following estimate:
		\begin{align*}
			\frac{f(\sqrt{\delta}\,y)-k}{\delta} &\leq \frac12 y^t H y + \frac{|r(\sqrt{\delta}\,y)|}{\delta}\\
			&\leq -\frac{c}{2}|y|^2 + \frac{|r(\sqrt{\delta}\,y)|}{|\sqrt{\delta}y|^2} |y|^2 \\
			&\leq -\frac{c}{2}|y|^2 + \frac{c}{4} |y|^2\\
			&= -\frac{c}{4} |y|^2.
		\end{align*}
		Since $|y| > R > \sqrt{8/c}$, it follows that $-|y|^2 < -8/c$. This implies the previous estimate is strictly less than $-1$, hence $y \notin S_\delta$.

		Therefore, $S_\delta \subset B_R$. This fact, combined with the bounds from Case 1, yields the inclusions
		\begin{align*}
			\{y \in B_R : \tfrac12 y^t H y \ge -1 + \varepsilon \}
			&\subset S_\delta \subset
			\{y \in B_R : \tfrac12 y^t H y \ge -1 - \varepsilon \},\\
			(1-\varepsilon)^{1/2} S &\subset S_\delta \subset (1+\varepsilon)^{1/2} S.
		\end{align*}
		Letting $\delta \to 0$ (and consequently $\varepsilon \to 0$), the conclusion follows.
	\end{proof}
	
	\begin{proof}[Proof of Theorem~\ref{thm:level_sets_convergence}]
		By Propositions~\ref{prop:covariogram_is_Ck} and~\ref{prop:first_derivative_covariogram}, $g_{K,p}$ is $C^2$ and $\nabla g_{K,p}(0) = 0$. By Theorem~\ref{thm:N_equals_n-1_M} and Theorem~\ref{thm:second_derivative_covariogram}, $H_{g_{K,p}} = -\frac{n+p}{4}\mathcal{M}_K$. Since $\mathcal{M}_K$ is positive definite (as $h_{\Pi_2 K}^2(u) = u^t\mathcal{M}_K u > 0$ for all $u\neq 0$) and $p > 0$, the Hessian $H_{g_{K,p}}$ is negative definite. Therefore the convergence of $S_{\delta,p}$ to $c_{n,p}\;\Pi_2^{\circ} K$ follows immediately from Lemma~\ref{lem:convergence_of_level_sets} applied to $f = g_{K,p}$ and $k = g_{K,p}(0)$, with $H = H_{g_{K,p}}$.
	\end{proof}

		
		\begin{proof}[Proof of Theorem~\ref{thm:level_sets_coupled_convergence}]
	We write
		\[
		\Delta_p(x)=\operatorname{vol}(K)-g_{K,p}(x),
		\qquad
		\Delta_\infty(x)=\operatorname{vol}(K)-g_K(x).
		\]

We first recall a Taylor estimate that will be used in the cases
\(\alpha=-1\) and \(\alpha<-1\). Since \(K\) has \(C^3\) boundary, the proof of
Proposition~~\ref{prop:covariogram_is_Ck} and the computations in Section~\ref{sec:derivatives_of_g} imply that, in a sufficiently
small ball \(B_\rho\) around the origin,
		\begin{equation}
    			\label{eq:taylor_delta_p}
   			 \Delta_p(x)=\frac{n+p}{8}x^t\mathcal{M}_Kx+O(p^2|x|^3).
		\end{equation}
where the constant in the \(O(\cdot)\) term is independent of \(p>0\) and
\(x\in B_\rho\). Indeed, the third derivatives of \(g_{K,p}\) are bounded by
\(Cp^2\) on \(B_\rho\), uniformly in \(p\). To see this, we define the functions $F(y) = (\frac{1 + y^{-p}}{2})^{-n/p}$ and $y_v(x) = \rho_{K+x}(v)/\rho_K(v)$. Note that:
		\begin{equation}
			\label{eq:taylor_expansion_rho_n}
			\rho_{K_{x, p}}(v)^n = \rho_{K}(v)^n F(y_v(x)).
		\end{equation} 
	The third derivative of \(F\) is given by:
		\begin{align*}
			F'''(y) &= \frac{n(1+y^{-p})^{-n/p}}{y^3(1+y^p)^3}
			\Big(2+n^2+(4+3p-p^2)y^p+ \\ 
			&\qquad\qquad\qquad\qquad\qquad(2+3p+p^2)y^{2p}-3n(1+(1+p)y^p)\Big)
		\end{align*}
	which implies the uniform bound for \(g_{K,p}\).

We also use the following monotonicity. If \(0<q\le p\), then the monotonicity of
power means gives
		\[
			\rho_{K_{x,p}}(v)\le \rho_{K_{x,q}}(v),
			\qquad v\in S^{n-1}.
		\]
Consequently,
		\begin{equation}\label{eq:monotonicity-delta-p}
			g_{K,p}(x)\le g_{K,q}(x),
			\qquad
			\Delta_p(x)\ge \Delta_q(x).
		\end{equation}

We begin with the case \(\alpha>-1\). Since the radial \((-p)\)-mean satisfies
			\[
				\min\{a,b\}
				\le
				\left(\frac{a^{-p}+b^{-p}}2\right)^{-1/p}
				\le
				2^{1/p}\min\{a,b\},
			\]
we have, for every \(x\in\operatorname{int}(-K)\),
			\[
				g_K(x)\le g_{K,p}(x)\le 2^{n/p}g_K(x).
			\]
Thus
			\begin{equation}\label{eq:comparison-gkp-gk}
				0\le g_{K,p}(x)-g_K(x)
				\le
				(2^{n/p}-1)\operatorname{vol}(K)
				=
				O(p^{-1}).
			\end{equation}
			
The classical covariogram expansion gives
			\begin{equation}\label{eq:classical-covariogram-expansion}
				\Delta_\infty(z)
				=
				h_{\Pi K}(z)+O(|z|^2),
				\qquad z\to0,
			\end{equation}
uniformly.

Let
		\[
			E_p=p^{-\alpha}L(p,p^\alpha).
		\]
If \(y\in E_p\), then \(x=p^\alpha y\in L(p,p^\alpha)\), and therefore
		\[
			\Delta_p(p^\alpha y)\leq p^\alpha.
		\]
By \eqref{eq:comparison-gkp-gk},
		\[
			\Delta_\infty(p^\alpha y)
			\le
			\Delta_p(p^\alpha y)+O(p^{-1})
			\le
			p^\alpha+O(p^{-1}).
		\]
Since \(\alpha>-1\), we have \(p^{-1}=o(p^\alpha)\). Hence
		\[
			\Delta_\infty(p^\alpha y)\le C p^\alpha.
		\]
Using the fact that \(h_{\Pi K}\) is a norm and the expansion \eqref{eq:classical-covariogram-expansion}, it follows
that the sets \(E_p\) are eventually contained in a fixed ball.

For \(y\) in a fixed bounded set, \eqref{eq:comparison-gkp-gk}
 and \eqref{eq:classical-covariogram-expansion} give
		\[
			\begin{aligned}
				\frac{\Delta_p(p^\alpha y)}{p^\alpha}
				&=
				\frac{\Delta_\infty(p^\alpha y)}{p^\alpha}
				+
				O(p^{-(\alpha+1)})  \\
				&=
				h_{\Pi K}(y)+O(p^\alpha)+O(p^{-(\alpha+1)}).
			\end{aligned}
		\]
Since \(\alpha<0\) and \(\alpha>-1\), both error terms tend to zero as \(p\to\infty\). Hence
		\[
			\frac{\Delta_p(p^\alpha y)}{p^\alpha}
			\xrightarrow[p\to\infty]{}
			h_{\Pi K}(y)
		\]
locally uniformly in \(y\). Therefore
		\[
			p^{-\alpha}L(p,p^\alpha)
			\xrightarrow[p\to\infty]{d_H}
			\{y:h_{\Pi K}(y)\le1\}
			=
			\Pi^\circ K.
		\]
This proves part  \eqref{thm:level_sets_coupled_convergence_case1}.

We now prove the critical case \(\alpha=-1\). Define
		\[
			F_p(y)
			=
			p\left(
			\operatorname{vol}(K)-g_{K,p}\left(\frac yp\right)
			\right)
			=
			p\Delta_p\left(\frac yp\right).
		\]
Then
		\[
			pL(p,p^{-1})
			=
			\{y\in\mathbb R^n:F_p(y)\leq 1\}.
		\]

Fix \(y\in\mathbb R^n\). For \(v\in S^{n-1}\), set
	\[
		a(v)=\rho_K(v),
		\qquad
		b_p(v)=\rho_{K+y/p}(v),
		\qquad
		x_0(v)=\rho_K(v)v.
	\]
By Lemma~\ref{lem:first_derivative_translated}, applied with \(u=y\), we have
		\[
			\rho_{K+ty}(v)=\rho_K(v)+t\frac{y\cdot n_K(x_0(v))}{v\cdot n_K(x_0(v))}+O(t^2),
		\]
where the error term is uniform in \(v\in S^{n-1}\), and uniform for \(y\) in
compact subsets of \(\mathbb R^n\). Taking \(t=1/p\), we obtain
		\[
			b_p(v)=a(v)+\frac1p\frac{y\cdot n_K(x_0(v))}{v\cdot n_K(x_0(v))}+O(p^{-2}).
		\]
Since \(x_0(v)=a(v)v\), it follows that
		\[
			\frac{b_p(v)}{a(v)}=1+\frac{q_y(v)}p+O(p^{-2}),
		\]
where
		\[
			q_y(v)=\frac{y\cdot n_K(x_0(v))}{x_0(v)\cdot n_K(x_0(v))}.
		\]
Note that the denominator of $q_y(v)$ is bounded away from zero, since it is exactly the support function of $K$ at $n_K(x_0(v))$, which by our assumption that $K$ contains the origin is non-zero.
Therefore
		\[
			\left(\frac{b_p(v)}{a(v)}\right)^{-p}\xrightarrow[p\to\infty]{}e^{-q_y(v)}
		\]
uniformly in \(v\in S^{n-1}\), and uniformly for \(y\) in compact subsets of \(\mathbb R^n\).
To see this, let $R_p(v, y)$ be the remainder term in the expansion of \(\left(\frac{b_p(v)}{a(v)}\right)^{-p}\), which is the $\mathcal{O}(p^{-2})$ term. Taking the logarithm, we have:
\[
	\log\left(\left(\frac{b_p(v)}{a(v)}\right)^{-p}\right) = -p \log\left(1 + \frac{q_y(v)}{p} + R_p(v, y)\right). 
\]
Now, for $p$ large enough, we have that $\left|\frac{q_y(v)}{p} + R_p(v, y)\right| < 1 $, so we can use the Taylor expansion of the logarithm:
\[
	-p\log\left(1 + \frac{q_y(v)}{p} + R_p(v, y)\right) = q_y(v) + \frac{R_p(v, y)}{p} + \mathcal{O}\left(\frac{1}{p}\left(\frac{q_y(v)}{p} + R_p(v, y)\right)^2\right).
\]
Using the fact that \(R_p(v, y) = \mathcal{O}(p^{-2})\) and that \(q_y(v)\) is uniformly bounded for $v$ in \(S^{n-1}\) and \(y\) in compact sets, we conclude that:

\[
	\log\left(\left(\frac{b_p(v)}{a(v)}\right)^{-p}\right) = -q_y(v) + O(p^{-1}), 
\]
and therefore:

\[
	\left(\frac{b_p(v)}{a(v)}\right)^{-p} = e^{-q_y(v)}e^{O(p^{-1})} = e^{-q_y(v)}(1 + O(p^{-1})) = e^{-q_y(v)} + O(p^{-1}), 
\]
with an error term that is independent of $v$ and $y$.

By the definition of the radial \((-p)\)-combination,
		\[
			\rho_{K_{y/p,p}}(v)^{-p}=\frac{\rho_K(v)^{-p}+\rho_{K+y/p}(v)^{-p}}2.
		\]
Hence
		\[
			\rho_{K_{y/p,p}}(v)^n=a(v)^n\left(\frac{1+\left(b_p(v)/a(v)\right)^{-p}}2\right)^{-n/p}.
		\]
Set
		\[
			A_p(v,y)=\frac{1+\left(b_p(v)/a(v)\right)^{-p}}2.
		\]
Then
		\[
			A_p(v,y)\xrightarrow[p\to\infty]{}A(v,y):=\frac{1+e^{-q_y(v)}}2
		\]
uniformly in \(v\in S^{n-1}\), and uniformly for \(y\) in compact subsets of \(\mathbb R^n\).

Using the polar formula for the volume of a star body,
		\[
			g_{K,p}\left(\frac yp\right)=\frac1n\int_{S^{n-1}}a(v)^nA_p(v,y)^{-n/p}\,dv.
		\]
Therefore
		\[
			F_p(y)=\frac pn\int_{S^{n-1}}a(v)^n\left(1-A_p(v,y)^{-n/p}\right)\,dv.
		\]
Since \(A_p(v,y)\) is bounded away from \(0\) and infinity for \(y\) in compact
sets, we have
		\[
			\frac pn\left(1-A_p(v,y)^{-n/p}\right)\xrightarrow[p\to\infty]{}\log A(v,y)
		\]
uniformly in \(v\in S^{n-1}\), and uniformly for \(y\) in compact subsets of \(\mathbb R^n\). Consequently,
		\[
			F_p(y)\xrightarrow[p\to\infty]{}\int_{S^{n-1}}\rho_K(v)^n\log\left(\frac{1+e^{-q_y(v)}}2\right)\,dv
		\]
locally uniformly in \(y\).

We now rewrite the limit. Using
		\[
			\log\left(\frac{1+e^{-q}}2\right)=-\frac q2+\log\cosh\left(\frac q2\right),
		\]
we get
		\[
			\begin{aligned}\int_{S^{n-1}}\rho_K(v)^n\log\left(\frac{1+e^{-q_y(v)}}2\right)\,dv
				&=-\frac12\int_{S^{n-1}}\rho_K(v)^nq_y(v)\,dv \\
				&\quad+\int_{S^{n-1}}\rho_K(v)^n\log\cosh\left(\frac{q_y(v)}2\right)\,dv.
			\end{aligned}
		\]
The first integral vanishes. Indeed, by Lemma~\ref{lem:jacobian_change_of_coordinates}, with \(x=\rho_K(v)v\), we have
		\[
			\rho_K(v)^n\,dv=(x\cdot n_K(x))\,dS(x).
		\]
Moreover,
		\[
			q_y(v)=\frac{y\cdot n_K(x)}{x\cdot n_K(x)}.
		\]
Therefore
		\[
			\int_{S^{n-1}}\rho_K(v)^nq_y(v)\,dv=\int_{\partial K}y\cdot n_K(x)\,dS(x)=0
		\]
by the divergence theorem. Hence
		\[
			F_p(y)\xrightarrow[p\to\infty]{}\Phi_K(y)
		\]
locally uniformly in \(y\), where
		\[
			\Phi_K(y)=\int_{\partial K}(x\cdot n_K(x))\log\cosh\left(\frac{y\cdot n_K(x)}{2x\cdot n_K(x)}\right)\,dS(x).
		\]

It remains to show that the sets \(pL(p,p^{-1})=\{y:F_p(y)<1\}\) are eventually
contained in a fixed ball. From equation \eqref{eq:taylor_delta_p}, after decreasing \(\rho\) if necessary,
there are constants \(c>0\), \(\eta>0\), and \(p_0\ge1\) such that, whenever
\(p\ge p_0\), \(x\in B_\rho\), and \(p|x|\le\eta\), one has
		\begin{equation}\label{eq:quadratic-lower-bound}
			\Delta_p(x)\ge c p|x|^2.
		\end{equation}
		
		Indeed, the error term \(O(p^2|x|^3)\) is then bounded by a sufficiently small
multiple of \(p|x|^2\).

Using \eqref{eq:monotonicity-delta-p} and \eqref{eq:quadratic-lower-bound}, we obtain the following localization estimate: there are
constants \(c_1>0\) and \(\rho_1>0\) such that, for all \(p\) large enough and
all \(x\in B_{\rho_1}\),
		\begin{equation}\label{eq:localization-lower-bound}
			\Delta_p(x)\ge c_1\min\{p|x|^2,|x|\}.
		\end{equation}To see this, if \(p|x|\le\eta\), then \eqref{eq:quadratic-lower-bound} gives the first bound. If
\(p|x|>\eta\), choose \(q=\eta/|x|\), whenever this is at least \(p_0\), and use
\(\Delta_p(x)\ge\Delta_q(x)\) together with \eqref{eq:quadratic-lower-bound} applied to \(q\). The remaining
range is covered by fixing \(q=p_0\) and using the fact that \(\Delta_{p_0}\) is
positive away from the origin.

Now put \(x=y/p\). From \eqref{eq:localization-lower-bound},
		\[
			F_p(y)=p\Delta_p\left(\frac yp\right)\ge c_1\min\{|y|^2,|y|\}
		\]
whenever \(y/p\in B_{\rho_1}\). If \(y/p\notin B_{\rho_1}\), then by \eqref{eq:monotonicity-delta-p}, with
\(q=p_0\), and by compactness away from the origin, we have
		\[
			F_p(y)\to\infty
		\]
uniformly in that region. Therefore there exists \(R>0\) such that
		\[
			pL(p,p^{-1})\subset B_R
		\]
for all sufficiently large \(p\).

Since \(F_p\to\Phi_K\) locally uniformly and the relevant level sets are
uniformly bounded, the sublevel sets converge in the Hausdorff metric. Thus
		\[
			pL(p,p^{-1})\xrightarrow[p\to\infty]{d_H}\{y\in\mathbb R^n:\Phi_K(y)\le1\}=\mathcal T_K.
		\]
This proves case (\ref{thm:level_sets_coupled_convergence_case3}).

Finally, we prove the case \(\alpha<-1\). Let
		\[
			E_p=p^{\frac{1-\alpha}{2}}L(p,p^\alpha).
		\]
Equivalently,
		\[
			E_p=\left\{y:\Delta_p\left(p^{\frac{\alpha-1}{2}}y\right)<p^\alpha\right\}.
		\]

We first show that the sets \(E_p\) are eventually bounded. Put
		\[
			x=p^{\frac{\alpha-1}{2}}y.
		\]
Using \eqref{eq:localization-lower-bound}, for \(x\) sufficiently small we have
		\[
			\Delta_p(x)\ge c_1\min\{p|x|^2,|x|\}.
		\]
In terms of \(y\),
		\[
			p|x|^2=p^\alpha |y|^2,\qquad |x|=p^{\frac{\alpha-1}{2}}|y|.
		\]
If \(y\in E_p\), then \(\Delta_p(x)<p^\alpha\). The previous lower bound implies
that \(|y|\) must remain bounded. The region where \(x\) is not sufficiently small
is excluded as before by monotonicity and compactness away from the origin. Hence
the sets \(E_p\) are eventually contained in a fixed ball.

For \(y\) in a fixed bounded set, we use the Taylor estimate \eqref{eq:taylor_delta_p}:
		\[
			\Delta_p\left(p^{\frac{\alpha-1}{2}}y\right)=\frac{n+p}{8}p^{\alpha-1}y^t\mathcal{M}_Ky+O\left(p^2p^{\frac{3(\alpha-1)}2}|y|^3\right).
		\]
Dividing by \(p^\alpha\), we obtain
		\[
			\frac{\Delta_p\left(p^{\frac{\alpha-1}{2}}y\right)}{p^\alpha}=\frac18y^t\mathcal{M}_Ky+\frac{n}{8p}y^t\mathcal{M}_Ky+O\left(p^{\frac{\alpha+1}{2}}|y|^3\right).
		\]
Since \(\alpha<-1\), we have
		\[
			p^{\frac{\alpha+1}{2}}\xrightarrow[p\to\infty]{}0.
		\]
Therefore
		\[
			\frac{\Delta_p\left(p^{\frac{\alpha-1}{2}}y\right)}{p^\alpha}\xrightarrow[p\to\infty]{}\frac18y^t\mathcal{M}_Ky
		\]
locally uniformly in \(y\). Consequently,
		\[
			p^{\frac{1-\alpha}{2}}L(p,p^\alpha)\xrightarrow[p\to\infty]{d_H}\left\{y\in\mathbb R^n:\frac18y^t\mathcal{M}_Ky\le1\right\}.
		\]
Since
		\[
			\Pi_2^\circ K=\{y\in\mathbb R^n:y^t\mathcal{M}_Ky\le1\},
		\]
the last set is
		\[
			\sqrt8\,\Pi_2^\circ K.
		\]
This finally proves part  \eqref{thm:level_sets_coupled_convergence_case2}.
\end{proof}
		
\begin{remark}[Relation with Orlicz projection bodies]
The transition body \(\mathcal T_K\) can be interpreted in the framework of
Orlicz projection bodies. Let
\[
\varphi(t)=\log\cosh(t/2).
\]
Then
\[
\varphi(0)=0,\qquad
\varphi'(t)=\frac12\tanh(t/2),\qquad
\varphi''(t)=\frac14\operatorname{sech}^2(t/2)>0.
\]
Hence \(\varphi\) belongs to the strictly convex Orlicz class used in the
definition of Orlicz projection bodies.

With this notation, the function \(\Phi_K\) may be written as
\[
\Phi_K(y)
=
\int_{\partial K}
(x\cdot n_K(x))
\varphi\left(
\frac{y\cdot n_K(x)}
{x\cdot n_K(x)}
\right)
\,dS(x).
\]
This is precisely the Orlicz modular that appears in the definition of the
Orlicz projection body, with the normalization of the level set adapted to the
present problem. Equivalently, if we define the gauge
\[
\|y\|_{\varphi,K}
=
\inf\left\{
\lambda>0:
\int_{\partial K}
(x\cdot n_K(x))
\varphi\left(
\frac{y\cdot n_K(x)}
{\lambda\,x\cdot n_K(x)}
\right)
\,dS(x)
\le 1
\right\},
\]
then
\[
\mathcal T_K
=
\{y\in\mathbb R^n:\|y\|_{\varphi,K}\le1\}.
\]
Thus \(\mathcal T_K\) is the polar unit ball associated with the Orlicz
projection-body construction for the Orlicz function
\(\varphi(t)=\log\cosh(t/2)\), up to the choice of normalization in the
functional (which is different from the one in \cite{LutwakYangZhang2010OrliczProjection}). For more on Orlicz bodies we refer to \cite{GardnerHugWeil2014OrliczBM} and to \cite{GardnerHugWeilYe2015DualOrliczBM,ZhuZhouXu2014DualOrliczBM} for its dual theory where radial functions play an important role, in the same way they do in this manuscript. 
\end{remark}		
		
\begin{remark}
	Replacing $K$ by $\beta^{-1/n} K$ where $\beta>0$ in Theorem \ref{thm:level_sets_coupled_convergence}, it is easy to see that
	\[p L(p, \beta p^{-1}) \to \beta^{1/n} \{y \in \mathbb{R}^n : \Phi_{\beta^{-1/n} K}(y) \le \beta\} = \{y \in \mathbb{R}^n : \Phi_K(y) \le \beta\}.\]
	So up to appropriate re-scaling, a transition between the bodies $\Pi^\circ K$ and $\Pi^\circ_2 K$ ocurrs when $\beta \to 0$ and $\beta \to +\infty$.
\end{remark}

\section*{Declarations}

\subsection*{Funding}
The authors were partially supported by the Ministry of Science and Innovation,
Spain, project PID2022-136320NB-I00. The second named author was supported by
Grant RYC2021-031572-I, funded by the Ministry of Science and Innovation /
State Research Agency / 10.13039/501100011033 and by the E.U. Next Generation
EU/Recovery, Transformation and Resilience Plan.

\subsection*{Competing interests}
The authors have no competing interests to declare.

\subsection*{Data availability}
Data sharing is not applicable to this article as no datasets were generated or
analyzed during the current study.

	\bibliographystyle{abbrv}
	
	\bibliography{referencias}

\bigskip\bigskip

\noindent\textsc{Departamento de Análisis Matemático, Universidad de Sevilla}\\
\textsc{C/ Tarfia s/n, Sevilla 41012, Spain}

\medskip

\noindent\textit{Email addresses:}\\
M.~Resende: \texttt{mresende@us.es}\\
J.~Haddad: \texttt{jhaddad@us.es}\\
C.~H.~Jiménez: \texttt{carloshugo@us.es}\\

\end{document}